\title{Alcoved Polytopes II}
\author{Thomas Lam and Alexander Postnikov}
\thanks{T.L. was supported by NSF DMS 0901111, and by a Sloan Fellowship.  A.P. was supported by NSF DMS-1100147 and CAREER DMS-0546209.}
\address{Department of Mathematics, University of Michigan, Ann Arbor, MI, 48109}
\email{tfylam@umich.edu}
\address{Department of Mathematics, M.I.T., Cambridge, MA, 02139}
 \email{apost@math.mit.edu} \keywords{Convex
polytopes, affine Weyl group, hypersimplex} \subjclass[2000]{Primary 52B, Secondary 05A05, 20F55}
\date{November 25, 2003; modified October, 2005; modified January, 2012}
\newtheorem{theorem}{Theorem}[section]
\newtheorem{proposition}[theorem]{Proposition}
\newtheorem{prop}[theorem]{Proposition}
\newtheorem{corollary}[theorem]{Corollary}
\newtheorem{cor}[theorem]{Corollary}
\newtheorem{definition}[theorem]{Definition}
\newtheorem{remark}[theorem]{Remark}
\newtheorem{lemma}[theorem]{Lemma}
\newtheorem{example}[theorem]{Example}
\newtheorem{question}[theorem]{Question}
\def\inn{\mathrm{in}}
\def\weight{\Lambda}
\def\coweight{\Lambda^{\vee}}
\def\root{L}
\def\coroot{L^{\vee}}
\def\R{\mathbb{R}}
\def\Z{\mathbb{Z}}
\def\<{\left<}
\def\>{\right>}
\def\Waff{W_\mathrm{aff}}
\def\Caff{C_\mathrm{aff}}
\def\Vol{\mathrm{Vol}}
\def\A{\mathcal{A}}
\def\F{\mathcal{F}}
\def\H{\mathcal{H}}
\def\cdes{\mathrm{cdes}}
\def\mod{\,\mathrm{mod}\,}
\def\inv{\mathrm{inv}}
\def\w{\omega}
\def\cmaj{\mathrm{cmaj}}
\def\maj{\mathrm{maj}}
\def\des{\mathrm{des}}
\def\g{\mathcal{G}}
\def\D{\Delta}
\def\P{P}
\def\Q{Q}
\def\sh{\mathrm{sh}}
\def\ll{\lambda}
\def\S{\mathcal{S}}
\def\id{\mathrm{id}}
\def\AA{S}
\newcommand\set[1]{\left\{#1\right\}}
\begin{document}

\begin{abstract}
This is the second of two papers where we study polytopes arising
from affine Coxeter arrangements.  Our results include a formula for
their volumes, and also compatible definitions of hypersimplices,
descent numbers and major index for all Weyl groups.  We give a
$q$-analogue of Weyl's formula for the order of the Weyl group.  For
$A_n$, $C_n$ and $D_4$, we give a Gr\"{o}bner basis which induces
the triangulation of alcoved polytopes.
\end{abstract}

\maketitle

\section{Introduction}
This is the second of two papers where we investigate \emph{alcoved
polytopes} arising from affine Coxeter arrangements.  Let $\Phi \subset V$ be an irreducible crystallographic root system and $W$ be
the corresponding Weyl group. Associated to $\Phi$ is an infinite
hyperplane arrangement known as the affine Coxeter arrangement.
This hyperplane arrangement subdivides $V$ into simplices of the
same volume which are called \emph{alcoves}.  We define a
\emph{proper alcoved polytope} to be a convex polytope $P$ which is
the closure of a union of alcoves.

In~\cite{API}, we studied these polytopes in the special situation of the root
system $\Phi = A_n$.  Two motivating examples for us were the
\emph{hypersimplices} and the \emph{alcoved matroid polytopes}.  Alcoved polytopes 
arising from other root systems have also been studied.  Payne~\cite{Pay} showed that alcoved polytopes with vertices lying in the coweight lattice are normal and Koszul in classical type.  Werner and Yu~\cite{WY} studied generating sets of alcoved polytopes.  Fomin and Zelevinsky's \emph{generalized associahedra}~\cite{FZ} are examples of polytopes which can be realized as alcoved polytopes.

We prove that the volume of an alcoved polytope $P$ is given by
\[
\Vol(P) = \sum_{\bar{w} \in W/C} I(P_{\bar{w}})
\]
where $W/C$ are certain cosets of the Weyl group, $P_{\bar{w}}$ are
certain alcoved polytopes and $I(P)$ denotes the number of integral
coweights lying in $P$.  The group $C \subset W$ was studied
previously by Verma~\cite{Ver}.  The order of the group $C$ is equal
to the index of connection of $\Phi$.  For the case $\Phi =
A_{n-1}$, the group $C$ is the cyclic group generated by the long
cycle $(123\cdots n)$, written in cycle notation.

Recall that the usual hypersimplex $\Delta_{k,n}$ has volume equal
to the Eulerian number $A_{k,n-1}$.  We define \emph{generalized
hypersimplices} $\Delta_k^\Phi$ to be certain alcoved polytopes
which generalize this property of $\Delta_{k,n}$ for each root
system $\Phi$.  To this end we introduce the \emph{circular
descent number} $\cdes: W \rightarrow \Z$ so that the volume of
$\Delta_k^\Phi$ counts the number of elements of $W$ with fixed
circular descent number.  We also introduce a \emph{circular major
map} $\cmaj: W \rightarrow C$ which interacts in an interesting way
with $\cdes$.  In particular $\set{w \in W \mid \cmaj(w) = \id}$
gives a set of coset representatives for $W/C$.  In type $A_{n-1}$,
the circular major map generalizes the major index of $S_n$, taken
modulo $n$.

Weyl's formula for the order of the Weyl group $W$ states that
\[
|W| = f \cdot r! \cdot a_1a_2\cdots a_r
\]
where $f$ is the index of connection, $r$ is the rank of $\Phi$ and
$a_i$ are the coefficients of the simple roots in the maximal root
of $\Phi$.  We prove, using the geometry of alcoved polytopes, that
\[
\sum_{w\in W} q^{\cdes(w)}\,e^{\cmaj(w)} = \left(\sum_{x \in C}
e^{x}\right)\cdot A_r(q)\cdot [a_1]_q\cdots [a_r]_q\] where $A_r(q)$
is the usual Eulerian polynomial and $[n]_q$ denotes usual
$q$-analogues.  Here $e^x \in \Z[C]$ lies in the group algebra of $C$.

Finally, in analogy with Sturmfels' triangulation of the
hypersimplex in type $A$~\cite{Stu}, we study the toric ideals $I_P$
associated with an alcoved polytope $P$.  When $\Phi$ is one of the
root systems $A_n, C_n, D_4$, we give a Gr\"{o}bner basis $\g_P$ for
$I_P$ which induces the triangulation of $P$ into alcoves.


\section{Root System Notation}
We recall standard terminology related to root systems,
see~\cite{Hum} for more details. Let $V$ be a real Euclidean space
of rank $r$ with nondegenerate symmetric inner product
$(\lambda,\mu)$.  Let $\Phi\subset V$ be an irreducible {\it
crystallographic root system\/} with choice of basis of {\it simple
roots\/} $\alpha_1,\dots,\alpha_r$. Let $\Phi^+\subset \Phi$ be the
corresponding set of {\it positive roots\/} and $\Phi^-=-\Phi^+$ be
the set of {\it negative roots}. Then $\Phi$ is the disjoint union
of $\Phi^+$ and $\Phi^-$. We will write $\alpha>0$, for
$\alpha\in\Phi^+$; and $\alpha<0$, for $\alpha\in\Phi^-$. The
collection of {\it coroots\/}
$\alpha^\vee=2\alpha/(\alpha,\alpha)\in V$, for $\alpha\in\Phi$,
forms the {\it dual root system\/} $\Phi^\vee$. The {\it Weyl
group\/} $W\subset\mathrm{Aut}(V)$ is generated by the reflections
$s_\alpha:\lambda\mapsto \lambda-(\lambda,\alpha^\vee)\,\alpha$ with
respect to roots $\alpha\in\Phi$. The Weyl group $W$ is actually
generated by simple reflections $s_i = s_{\alpha_i}$ subject to the
Coxeter relations. The {\it length\/} $\ell(w)$ of an element $w\in
W$ is the length of a shortest decomposition for $w$ in terms of
simple reflections. There is a unique element $w_\circ\in W$ of
maximal possible length.

The {\it root lattice\/} $\root=\root(\Phi)\subset V$ is the integer
lattice spanned by the roots $\alpha\in\Phi$. It is generated by the
simple roots $\alpha_i$. The {\it weight lattice\/} is defined by
$\weight = \weight(\Phi)=\{\lambda\in V\mid (\lambda,\alpha^\vee)\in
\Z, \textrm{ for all } \alpha\in \Phi\}$. The weight lattice
$\weight$ contains the root lattice $\root$ as a subgroup of finite
index $f$. The quotient group $\weight/\root$ is isomorphic to the
center of the universal simply-connected Lie group $G^\vee$
associated with the root system $\Phi^\vee$. The index
$f=|\weight/\root|$ is called the {\it index of connection}.

The {\it coroot lattice} is the integer lattice $\coroot =
\coroot(\Phi)$ spanned by the coroots $\alpha_i^{\vee}$.  The {\it
coweight lattice} $\coweight$ is the integer lattice defined by
$\coweight = \coweight(\Phi) = \set{\lambda \in V \mid (\lambda,
\alpha) \in \Z, \textrm{ for all } \alpha \in \Phi}$.  Let
$\omega_1,\dots,\omega_r\subset V$ be the basis dual to the basis of
simple roots $\alpha_1,\dots,\alpha_r$, i.e.,
$(\omega_i,\alpha_j)=\delta_{ij}$. The $\omega_i$ are called the
{\it fundamental coweights}. They generate the coweight lattice
$\coweight$.

Let $\rho=\omega_1+\cdots+\omega_r$. The {\it height\/} of a root
$\alpha$ is the number $(\rho,\alpha)$ of simple roots that add up
to $\alpha$.  Since we assumed that $\Phi$ is irreducible, there
exists a unique {\it highest root\/} $\theta\in\Phi^+$ of maximal
possible height. For convenience we set $\alpha_0 = -\theta$. Let
$a_0 = 1$ and $a_1,\dots,a_r$ be the positive integers given by
$a_i=(\omega_i,\theta)$, or, equivalently, $a_0\alpha_0 +
a_1\alpha_1+\cdots+a_r\alpha_r = 0$. The {\it dual Coxeter number\/}
is defined as $h^\vee=(\rho,\theta)+1= a_0 + a_1+\cdots+a_r$.

\begin{lemma}
\label{lem:Weylaction} Let $\ll \in \coweight$ be an integral
coweight and $w \in W$ be a Weyl group element.  Then
\[
w(\ll) - \ll \in \coroot.
\]
\end{lemma}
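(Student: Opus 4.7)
The plan is to reduce to the case of a simple reflection by induction on the length $\ell(w)$ and exploit the explicit reflection formula $s_\alpha(\mu) = \mu - (\mu,\alpha^\vee)\alpha$. Writing this formula with $\alpha$ replaced by $\alpha^\vee$ (and noting that under the inner product $(\alpha^\vee)^\vee = \alpha$, up to the usual normalization), the action of a simple reflection $s_i$ on a vector $\mu \in V$ can be written so that
\[
s_i(\mu) - \mu = -(\mu,\alpha_i)\,\alpha_i^\vee,
\]
which is the form I actually want, since the coweight lattice $\coweight$ is defined by the condition $(\mu,\alpha) \in \Z$ for all roots $\alpha$.

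First I would record the easy but essential fact that $\coweight$ is stable under the $W$-action. This follows directly from the defining condition of $\coweight$ together with the fact that $W$ permutes $\Phi$: if $\mu \in \coweight$ and $w \in W$, then $(w(\mu), \alpha) = (\mu, w^{-1}(\alpha)) \in \Z$ for every $\alpha \in \Phi$, so $w(\mu) \in \coweight$.

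Next, for the base case $w = s_i$ a simple reflection, the displayed formula gives
\[
s_i(\ll) - \ll = -(\ll,\alpha_i)\,\alpha_i^\vee,
\]
and $(\ll,\alpha_i) \in \Z$ by $\ll \in \coweight$, while $\alpha_i^\vee \in \coroot$ by definition; hence $s_i(\ll) - \ll \in \coroot$.

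For the inductive step, suppose the statement holds for all Weyl group elements of length less than $\ell(w)$, and write $w = s_i w'$ with $\ell(w') = \ell(w) - 1$. Then
\[
w(\ll) - \ll = \bigl(s_i(w'(\ll)) - w'(\ll)\bigr) + \bigl(w'(\ll) - \ll\bigr).
\]
The second summand lies in $\coroot$ by induction. For the first summand, the $W$-invariance of $\coweight$ established above gives $w'(\ll) \in \coweight$, so applying the base case with $\mu = w'(\ll)$ shows $s_i(w'(\ll)) - w'(\ll) = -(w'(\ll), \alpha_i)\,\alpha_i^\vee \in \coroot$. Since $\coroot$ is a lattice (in particular a subgroup of $V$), the sum lies in $\coroot$, completing the induction. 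The only genuine ingredient beyond bookkeeping is the $W$-invariance of $\coweight$, which is immediate; I do not foresee any real obstacle.
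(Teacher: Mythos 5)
Your proof is correct and is essentially the paper's argument: reduce to a simple reflection and compute $s_i(\ll)-\ll = -(\ll,\alpha_i)\,\alpha_i^\vee \in \coroot$. The only (immaterial) difference is how the reduction is justified --- you invoke the $W$-stability of $\coweight$ to keep the intermediate vectors $w'(\ll)$ in $\coweight$, whereas the paper notes $\coroot\subset\coweight$, so that the telescoping terms stay in $\coweight$; both are one-line observations and the key computation is identical.
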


\begin{proof}
Since $\coroot \subset \coweight$ it suffices to check this for a
simple reflection $s_\alpha$.  We compute $s_\alpha(\ll) - \ll =
(\ll,\alpha)\,\alpha^\vee \in \coroot$.
\end{proof}

\section{The Affine Weyl group and Alcoved Polytopes}
\label{sec:alcoved} The {\it affine Weyl group\/} $\Waff$ associated
with the root system $\Phi$ is generated by the reflections
$s_{\alpha,k}:V\to V$, $\alpha\in\Phi$, $k\in\Z$, with respect to
the affine hyperplanes
$$
H_{\alpha,k}=\{\lambda\in V\mid (\lambda,\alpha)=k\}.
$$

The coweight lattice $\coweight$ and coroot lattice $\coroot$ act on
the space $V$ by translations.  We will identify $\coweight$ and
$\coroot$ with these groups of translations.
The Weyl group $W$ normalizes these groups.

\begin{lemma} {\rm \cite{Hum}} \
The affine Weyl group $\Waff$ is the semidirect product $W\ltimes
\coroot$  of the usual Weyl group $W$ and the coroot lattice
$\coroot$. \label{lem:semidirect-product}
\end{lemma}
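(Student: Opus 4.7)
The plan is to construct an explicit isomorphism from the abstract semidirect product $W \ltimes \coroot$ to the concrete group $\Waff$ of affine transformations, where $W$ acts on $\coroot$ by its usual linear action.

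First I would show that every translation by a coroot already sits inside $\Waff$. The key computation is that the affine reflection in $H_{\alpha,k}$ is given explicitly by
\[
s_{\alpha,k}(\lambda) \;=\; \lambda - (\lambda,\alpha^\vee)\,\alpha + k\,\alpha^\vee,
\]
from which $s_{\alpha,k} \circ s_{\alpha,0}$ is seen to be translation by $k\alpha^\vee$. Since the coroots $\alpha^\vee$ generate $\coroot$ as a $\Z$-module, every translation by an element of $\coroot$ lies in $\Waff$. Conversely, the same identity rewritten as $s_{\alpha,k} = (\text{translation by }k\alpha^\vee)\circ s_{\alpha,0}$ expresses every generator of $\Waff$ as a product of an element of $W$ and a translation by $\coroot$, so $\Waff$ is generated by $W$ together with these translations.

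Next, for any $w\in W$ and any $\mu\in V$ the standard conjugation formula $w\circ(\text{translation by }\mu)\circ w^{-1}=(\text{translation by }w(\mu))$ holds, and because $W$ permutes $\Phi^\vee$ it preserves $\coroot$. Hence the translation subgroup $\coroot$ is normal in $\Waff$ and conjugation by $W$ implements exactly the linear Weyl action on $\coroot$. This produces a surjective group homomorphism $W \ltimes \coroot \to \Waff$ sending $(w,\mu)$ to $(\text{translation by }\mu)\circ w$.

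Finally, injectivity is extracted from the linear part: the image $(\text{translation by }\mu)\circ w$ has linear part $w$, so if it equals $\id$ then already $w=\id$, and the remaining pure translation by $\mu$ can only be trivial when $\mu=0$. The main delicate point — indeed the reason the statement isolates the coroot lattice rather than the root or (co)weight lattice — is the bookkeeping in the first step: the hyperplane $H_{\alpha,k}$ is defined via the inner product with $\alpha$ while the paper's reflection formula uses pairing with $\alpha^\vee$, and it is precisely this interplay that forces the translation produced by two parallel reflections to land in $\coroot$ and no larger (or smaller) lattice.
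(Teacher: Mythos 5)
Your proof is correct: the identity $s_{\alpha,k}(\lambda)=\lambda-\left((\lambda,\alpha)-k\right)\alpha^\vee$ does give $s_{\alpha,k}s_{\alpha,0}=$ translation by $k\alpha^\vee$, and the conjugation formula plus the linear-part argument yields the semidirect product $W\ltimes\coroot$ exactly as claimed. The paper gives no proof of this lemma at all — it is quoted from \cite{Hum} — and your argument is essentially the standard one found there, so there is nothing further to compare.
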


The connected components of the complement to these hyperplanes
$V\setminus \bigcup H_{\alpha,k}$ are called {\it alcoves}. Let $\A$
be the set of all alcoves. A {\it closed alcove\/} is the closure of
an alcove. Each alcove $A$ has the following form:
$$
A=\{\lambda\in V\mid m_\alpha<(\lambda,\alpha)<m_\alpha+1, \textrm{
for }\alpha\in\Phi^+\},
$$
where $m_\alpha=m_\alpha(A)$ is a collection of integers associated
with the alcove $A$.

\begin{lemma} {\rm \cite{Hum}} \
The affine Weyl group $\Waff$ acts simply transitively on the
collection $\A$ of all alcoves. \label{lem:simply-transitive}
\end{lemma}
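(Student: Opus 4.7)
The plan is to split the claim into transitivity of the action and triviality of point stabilizers, taking as a basepoint the \emph{fundamental alcove}
\[
A_0 = \set{\lambda \in V \mid (\lambda,\alpha_i)>0 \text{ for } i=1,\dots,r,\ (\lambda,\theta)<1}.
\]
The description of alcoves just preceding the statement shows $A_0 \in \A$, with walls carried by the affine hyperplanes $H_{\alpha_i,0}$ and $H_{\theta,1}$, to which are attached the reflections $s_i = s_{\alpha_i,0}$ and $s_0 = s_{\theta,1}$ in $\Waff$.

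For transitivity I would run a standard \emph{gallery argument}. Given any alcove $A \in \A$, choose interior points $x \in A_0$ and $y \in A$, and join them by a line segment chosen generically so as to avoid every codimension-two intersection of the hyperplanes $H_{\alpha,k}$. The segment then crosses a finite sequence of walls $H_{\beta_1,k_1},\dots,H_{\beta_N,k_N}$ and traverses a sequence of alcoves $A_0 = B_0, B_1,\dots, B_N = A$ in which consecutive alcoves are adjacent across one of these walls. A short induction on $N$ assembles the reflections $s_{\beta_j,k_j} \in \Waff$ into an element of $\Waff$ sending $A_0$ onto $A$.

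For freeness, suppose $w \in \Waff$ stabilizes $A_0$ setwise. Using Lemma~\ref{lem:semidirect-product}, write $w$ as a translation by some $\mu \in \coroot$ composed with an element $u \in W$. Since $w$ is an affine isometry permuting the $r+1$ vertices of $\overline{A_0}$, the barycenter of $A_0$ is a fixed point of $w$, so $\mu$ is determined by $u$, and it suffices to rule out non-trivial $u$. For this I would use the standard length-function interpretation: the number of affine hyperplanes separating $A_0$ from $w(A_0)$ coincides with the length $\ell(w)$ of $w$ in $\Waff$, so $w(A_0) = A_0$ forces $\ell(w) = 0$ and therefore $w = \id$.

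The main obstacle is this freeness step: the gallery argument for transitivity is a clean geometric induction, but to promote it to freeness one needs the length function on $\Waff$ to be well-defined and to equal the minimal number of hyperplane crossings in a gallery. Establishing this amounts to verifying that the reflections $s_0, s_1, \dots, s_r$ in the walls of $A_0$ generate $\Waff$ as a Coxeter group with the expected braid relations --- a fact that is packaged with the identification $\Waff = W \ltimes \coroot$ already cited from Humphreys. With this in place both halves of the lemma follow together.
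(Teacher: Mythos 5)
The paper gives no argument here at all---the lemma is quoted from Humphreys---so your sketch has to be judged against the standard textbook proof. Your transitivity half is that proof and is fine: a generic segment from an interior point of $A_\circ$ to an interior point of a given alcove meets only finitely many hyperplanes $H_{\alpha,k}$ (local finiteness of the affine arrangement), and reflecting successively in the crossed walls assembles an element of $\Waff$ carrying $A_\circ$ to the given alcove.

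The gap is in the freeness half. The identity you lean on---that $\ell(w)$ equals the number of hyperplanes $H_{\alpha,k}$ separating $A_\circ$ from $w(A_\circ)$---is not ``packaged with'' Lemma~\ref{lem:semidirect-product}. The decomposition $\Waff=W\ltimes\coroot$ gives neither the statement that the $r+1$ reflections in the walls of $A_\circ$ generate $\Waff$ nor the crossing count, and the braid relations are irrelevant to it: in Humphreys the count is proved by induction on length, using only the generation statement (itself a refinement of your gallery argument) and an analysis of how a single wall reflection permutes the hyperplanes other than its own wall, after which simple transitivity is an immediate corollary. So the step you defer is essentially the content of the lemma: if you are allowed to quote it, you may as well quote the lemma itself, which is exactly what the paper does; if not, your sketch does not prove it. The barycenter remark also does no work---knowing that a stabilizing element $w=t_\mu u$ fixes the barycenter, so that $\mu$ is determined by $u$, does not rule out a nontrivial $u$. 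And the danger is real: the elements of the group $C$ of Section~\ref{sec:groupC}, composed with suitable \emph{coweight} translations, do stabilize $A_\circ$; the whole point of simple transitivity is that these symmetries involve translations in $\coweight\setminus\coroot$ and hence lie outside $\Waff$. Any correct freeness argument must use the distinction between $\coroot$ and $\coweight$ at precisely this step, and your sketch, as written, never does.
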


The fundamental alcove is the simplex given by
\begin{equation}
\begin{array}{l}
A_\circ= \{\lambda\in V\mid 0<(\lambda,\alpha)<1,\,
\textrm{for }\alpha\in\Phi^+\}\\[.1in]
\quad\ = \{\lambda\in V \mid(\lambda,\alpha_i)>0,\textrm{ for
}i=1,\dots,r;
\textrm{ and }(\lambda,\theta)<0\}\\[.1in]
\quad\ = \{x_1\omega_1+\cdots + x_r\omega_r\mid x_1,\dots,x_r>0
\textrm{ and }
a_1 x_1+\cdots +a_r x_r<1\}\\[.1in]
\quad\ = \textrm{Convex Hull of the points } 0,\ \omega_1/a_1,\
\dots\ ,\ \omega_r/a_r.
\end{array}
\label{eq:A_0}
\end{equation}
Lemma~\ref{lem:simply-transitive} implies that all alcoves are
obtained from $A_\circ$ by the action of $\Waff$. In particular, all
closed alcoves are simplices with the same volume. The closure of
the fundamental alcove $A_\circ$ is a fundamental domain of $\Waff$.
Let $\F\supset\A$ be the set of all faces of alcoves of all
dimensions.  We will think of elements of $\F$ as relatively open
sets, so that the space $V$ is the disjoint union of elements of
$\F$.

Our main object is defined as follows.

\begin{definition} An alcoved polytope $P$ is a convex polytope in
the space $V$ such that $P$ is a union of finitely many elements of
$\F$. A proper alcoved polytope is an alcoved polytope of top
dimension.
\end{definition}

By the definition, each proper alcoved polytope comes equipped with
a triangulation into closed alcoves.
The following Lemma is immediate from the definitions.

\begin{lemma} A bounded subset $P\subset V$ is an
alcoved polytope if and only if $P$ is the intersection of several
half-spaces of the form $\{\lambda\in V\mid (\lambda,\alpha)\geq
k\}$, for $\alpha\in\Phi$ and $k\in\Z$.
\end{lemma}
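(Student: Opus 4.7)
My plan is to prove both implications directly; the reverse is nearly immediate, while the forward direction rests on a short dimension argument identifying the supporting hyperplane of each facet of $P$.

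For $(\Leftarrow)$, suppose $P = \bigcap_{i \in I}\{\lambda \mid (\lambda,\alpha_i) \geq k_i\}$ is bounded. Given any $p \in P$, let $G \in \F$ be the relatively open face of the alcove stratification that contains $p$. For each defining inequality, the face $G$ is either entirely contained in $H_{\alpha_i,k_i}$ or entirely in one of its two open half-spaces; since $p$ satisfies $(\lambda,\alpha_i) \geq k_i$, so does every point of $G$. Hence $G \subset P$, and $P$ is a union of elements of $\F$.

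For $(\Rightarrow)$, express the bounded convex polytope $P$ as a finite intersection of closed half-spaces (its facet-supporting half-spaces, together with pairs of half-spaces cutting out $\mathrm{aff}(P)$ when $P$ is not top-dimensional). It suffices to show that each bounding hyperplane is of the form $H_{\alpha,k}$. Consider a facet $F$ of $P$; the relative interior $F^\circ$ lies in the boundary of $P$ and is therefore disjoint from every open alcove, so the stratification decomposes $F^\circ$ into elements of $\F$ of dimension at most $\dim F$. Any such $G$ with $\dim G < \dim F$, or with $\dim G = \dim F$ but $\mathrm{aff}(G) \neq \mathrm{aff}(F)$, contributes to $F^\circ$ only a subset of dimension strictly less than $\dim F$; since finitely many such pieces cannot cover the open set $F^\circ$, some $G \subset F^\circ$ must satisfy $\mathrm{aff}(G) = \mathrm{aff}(F)$. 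The affine span of any element of $\F$ of dimension $\dim F$ is a Coxeter hyperplane $H_{\alpha,k}$, so $\mathrm{aff}(F) = H_{\alpha,k}$, and a sign flip on $\alpha$ puts the corresponding half-space in the required form.

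The main obstacle is the dimension count in $(\Rightarrow)$, which rules out the possibility that some facet of $P$ is supported by a hyperplane lying outside the affine Coxeter arrangement. Once this step is established, both directions follow directly from the definitions.
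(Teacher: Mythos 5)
Your $(\Leftarrow)$ direction is fine (modulo the trivial remark that a bounded set meets only finitely many elements of $\F$, so the union you produce is finite and $P$ is indeed a polytope), and your $(\Rightarrow)$ argument is correct and complete when $P$ is a \emph{proper} alcoved polytope: there the dimension count showing that each facet-supporting hyperplane is some $H_{\alpha,k}$ is exactly the right way to make precise what the paper simply declares immediate from the definitions (the paper gives no proof of this lemma).

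The gap is in the non-top-dimensional case, which the lemma does cover, since alcoved polytopes in this paper may have any dimension. Two steps break down there. First, you introduce ``pairs of half-spaces cutting out $\mathrm{aff}(P)$'' but never show they can be taken of the required form; this needs the fact that $\mathrm{aff}(P)$ is an intersection of hyperplanes $H_{\alpha,k}$, which follows from your own covering argument applied to the relative interior of $P$: some cell $G\in\F$ contained in $P$ has $\dim G=\dim P$, and the affine hull of a cell is the intersection of the arrangement hyperplanes containing it, hence $\mathrm{aff}(G)=\mathrm{aff}(P)$ is such an intersection. Second, your key assertion that ``the affine span of any element of $\F$ of dimension $\dim F$ is a Coxeter hyperplane $H_{\alpha,k}$'' is true only when $\dim F=r-1$; if $\dim P<\dim V$, a facet $F$ of $P$ has $\dim F<r-1$ and $\mathrm{aff}(F)$ is an intersection of several arrangement hyperplanes, not a single one (likewise, ruling out cells of dimension exceeding $\dim F$ meeting $F^\circ$ should then be phrased via the relative interior of $P$ rather than via open alcoves). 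The repair is routine with your tools: take a cell $G\subset P$ with $\mathrm{aff}(G)=\mathrm{aff}(F)$ as you do, choose an arrangement hyperplane $H_{\alpha,k}$ containing $\mathrm{aff}(F)$ but not $\mathrm{aff}(P)$ (possible since $\mathrm{aff}(F)\subsetneq\mathrm{aff}(P)$ and $\mathrm{aff}(F)$ is the intersection of the hyperplanes containing $G$), observe that $H_{\alpha,k}\cap\mathrm{aff}(P)$ is the hyperplane of $\mathrm{aff}(P)$ supporting $P$ along $F$, and flip the sign of $\alpha$ so that $P\subset\{\lambda\mid(\lambda,\alpha)\geq k\}$. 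With these additions your proof of the forward direction is complete in general.
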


Let $(W,S)$ be a Coxeter group and $u, v \in W$.  A path from $u$ to
$v$ is a sequence $u = w_0 \to w_1 \to w_2 \to \cdots \to w_s = v$
such that $w_{i+1} = w_is$ for some simple reflection $s \in S$.  A
subset $K \subset W$ is called \emph{convex} if for every $u, v \in
K$ we have that any shortest path from $u$ to $v$ lies in $K$.

\begin{prop}
Let $P \subset V$ be a bounded subset which is a union of closed
alcoves.  Then $P$ is a convex polytope if and only if one of the
following conditions hold:
\begin{enumerate}
\item
\label{it:convex1} For any two alcoves $A, B \subset P$, any
shortest path $A = A_0 \to A_1 \to A_2 \to \cdots \to A_s = B$ lies
in $P$.  Here $A_i \in \A$ are alcoves and $A' \to A''$ means that
the closures of the two alcoves $A'$ and $A''$ share a facet.
\item
\label{it:convex2} The subset $W_P = \set{w \in \Waff \mid
w(A_\circ) \subset P}$ of the affine Weyl group is a convex subset.
\end{enumerate}
\end{prop}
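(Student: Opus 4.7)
The plan is to organize the proof around the bijection between $\Waff$ and $\A$ from Lemma~\ref{lem:simply-transitive}, and then argue geometrically using the characterization of $P$ by integer affine inequalities.

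First, the equivalence of (\ref{it:convex1}) and (\ref{it:convex2}) is a direct translation. The map $w \mapsto w(A_\circ)$ is a bijection $\Waff \to \A$. Under this map, alcoves $w(A_\circ)$ and $w'(A_\circ)$ share a facet if and only if $w' = w s$ for some simple reflection $s$ of $\Waff$, since the reflection across the $i$-th facet of $w(A_\circ)$ is $w s_i w^{-1}$, which sends $w(A_\circ)$ to $(w s_i)(A_\circ)$. Consequently, galleries between alcoves correspond exactly to Coxeter-theoretic paths in $\Waff$, and minimal galleries correspond to shortest paths. Condition (\ref{it:convex1}) for $P$ then becomes literally the statement that $W_P$ is convex in $\Waff$.

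Next, I would prove $P$ convex $\Rightarrow$ (\ref{it:convex1}). By the preceding lemma, $P$ is the intersection of finitely many half-spaces of the form $\{\lambda : (\lambda,\alpha) \geq k\}$ with $\alpha\in\Phi$ and $k\in\Z$, so each defining hyperplane of $P$ is a wall of the affine arrangement. Let $A, B \subset P$ and let $A = A_0 \to A_1 \to \cdots \to A_s = B$ be a shortest gallery. The standard fact is that the walls crossed by a minimal gallery are precisely the hyperplanes separating $A$ from $B$, each crossed exactly once. No defining wall of $P$ can be among these, since $A$ and $B$ are both in $P$ and therefore are not separated by any such wall. Hence each intermediate alcove $A_i$ lies on the same side of every defining hyperplane of $P$ as $A$ does, and therefore $A_i \subset P$.

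Conversely, for (\ref{it:convex1}) $\Rightarrow$ $P$ convex, given $x, y \in P$ I would show $[x,y] \subset P$ by converting the segment into a gallery. After perturbation, I may assume $x, y$ lie in the interiors of alcoves $A, B \subset P$ and that the open segment $(x,y)$ avoids all faces of codimension $\geq 2$ and crosses walls transversely. The sequence of alcoves visited by the segment forms a gallery $A = A_0 \to A_1 \to \cdots \to A_s = B$ which is minimal, since the segment crosses each wall at most once; hypothesis (\ref{it:convex1}) then gives $A_i \subset P$ for all $i$, so $[x,y] \subset P$. The case of non-generic $x, y$ follows by taking limits, using that $P$ is closed. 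Since $P$ is a finite union of closed alcoves, convexity automatically upgrades it to a convex polytope. The main subtlety is this perturbation/genericity argument, which relies precisely on the same characterization of minimal galleries (non-repetition of crossed walls) that powers the other direction.
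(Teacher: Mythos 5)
Your proof is correct, and it diverges from the paper's in one direction. For (\ref{it:convex1}) $\Rightarrow$ convexity and for the equivalence of (\ref{it:convex1}) and (\ref{it:convex2}) you do essentially what the paper does: translate adjacency of alcoves into right multiplication by simple generators of $\Waff$ (so Coxeter paths are galleries and shortest paths are minimal galleries), and turn a generic segment between interior points of two alcoves of $P$ into a minimal gallery, with the non-generic case handled by a limiting/perturbation argument at the same level of rigor as the paper's ``without loss of generality.'' Where you differ is the direction convexity $\Rightarrow$ (\ref{it:convex1}): the paper argues by contradiction with a folding trick --- if a shortest gallery leaves $P$ it must recross the wall it exited through, and reflecting the intermediate subpath in that wall produces a strictly shorter gallery --- whereas you invoke the standard fact that a minimal gallery crosses exactly the hyperplanes separating $A$ from $B$, each once, and combine it with the preceding lemma expressing $P$ as an intersection of half-spaces bounded by arrangement hyperplanes: no bounding hyperplane of $P$ separates $A$ from $B$, so no intermediate alcove can leave any of the defining half-spaces. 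Your route is cleaner and makes the role of the half-space description explicit, at the cost of importing the minimal-gallery/separating-wall fact as known (it is standard, e.g.\ in Humphreys); the paper's folding argument is in effect a self-contained proof of the same fact, so it avoids the external citation. Both directions you give are sound, and your closing remark that a convex set which is a finite union of closed alcoves is automatically a polytope is a correct (and worth stating) point that the paper leaves implicit.
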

\begin{proof}
Suppose $P$ is a polytope and $A = A_0 \to A_1 \to A_2 \to \cdots
\to A_s = B$ is a shortest path with $A, B \in P$.  Suppose that
$A_{i}$ lies in $P$ but $A_{i+1}$ lies outside of $P$.  Then the
hyperplane $H_{\alpha,k}$ which separates $A_{i}$ and $A_{i+1}$ must
be a facet of $P$ so eventually the shortest path must cross
$H_{\alpha,k}$ again, say $A_{j}$ lies on the same side of
$H_{\alpha,k}$ as $A_{i+1}$ and $A_{j+1}$ lies on the other side.
Then reflecting the path $A_{i+1} \to A_{i+2} \to \cdots \to A_{j}$
in the hyperplane $H_{\alpha,k}$, we get another path from $A$ to
$B$ which is shorter.  Conversely, suppose condition
(\ref{it:convex1}) holds but $P$ is not convex.  This implies that
there are alcoves $A, B \in P$ and points $a \in A$, $b \in B$ such
that the straight line $\overline{ab}$ does not lie in $P$. Since
$P$ is compact we may assume $a$ and $b$ lie in the interior of $A$
and $B$ respectively.  Thus without loss of generality we may assume
that the line $\overline{ab}$ does not intersect any face of $\F$ of
codimension more than one.  The sequence of alcoves obtained by
travelling along $\overline{ab}$ must be a shortest path.  This is
clear as every hyperplane $H_{\alpha,k}$ that $\overline{ab}$
intersects separates $A$ from $B$ and so must be a separating
hyperplane between some two alcoves $A_i$ and $A_{i+1}$ in any path
from $A$ to $B$.

Condition~\ref{it:convex2} follows immediately from translating the
action of the simple generators of $\Waff$ on alcoves.
\end{proof}

Thus each alcoved polytope is of the form
$$
P = \{\lambda\in V \mid k_\alpha\leq (\lambda,\alpha) \leq
K_\alpha,\ \textrm{for }\alpha\in\Phi^+\},
$$
where $k_\alpha=k_\alpha(P)$ and $K_\alpha=K_\alpha(P)$ are two
collections of integers indexed by positive roots $\alpha\in\Phi^+$.

Let $\coweight/h^\vee= \{\lambda/h^\vee \mid \lambda\in\coweight\}$
be the weight lattice shrunk $h^\vee$ times.

\begin{lemma} {\rm\cite{Kos,LP}} \
For every alcove $A\in\A$, there is exactly one point of the lattice
$\coweight/h^\vee$ inside of $A$. For the fundamental alcove
$A_\circ$, we have $A_\circ \cap (\coweight/h^\vee) =
\{\rho/h^\vee\}$. \label{lem:central-point}
\end{lemma}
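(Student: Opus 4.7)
The plan is to establish the statement for the fundamental alcove $A_\circ$ first, then transfer to all alcoves via the simply transitive action of $\Waff$ (Lemma~\ref{lem:simply-transitive}).

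\medskip

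\textbf{Step 1: The fundamental alcove.} I would use the coordinate description of $A_\circ$ from~\eqref{eq:A_0}, namely $A_\circ = \{x_1\omega_1 + \cdots + x_r\omega_r \mid x_i > 0,\ a_1 x_1 + \cdots + a_r x_r < 1\}$. Any $\mu \in \coweight/h^\vee$ can be written as $\mu = \lambda/h^\vee$ with $\lambda \in \coweight$; then $\lambda = y_1\omega_1 + \cdots + y_r\omega_r$ with $y_i = (\lambda,\alpha_i) \in \Z$. Membership $\mu \in A_\circ$ translates to $y_i \geq 1$ for all $i$ together with $a_1 y_1 + \cdots + a_r y_r < h^\vee = 1 + a_1 + \cdots + a_r$. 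Combined with $y_i \geq 1$ and $a_i \geq 1$, this forces $y_i = 1$ for every $i$, so $\lambda = \rho$. Conversely, $(\rho,\alpha_i) = 1$ and $(\rho,\theta) = h^\vee - 1$ show that $\rho/h^\vee \in A_\circ$, giving existence.

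\medskip

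\textbf{Step 2: Transfer via $\Waff$.} The idea is to show that the set $\coweight/h^\vee$ is stable under $\Waff = W \ltimes \coroot$. For the linear part $W$: the definition of $\coweight$ is manifestly $W$-invariant since $W$ permutes $\Phi$, so $w(\lambda/h^\vee) \in \coweight/h^\vee$ for all $w \in W$. For the translations: if $v \in \coroot$ and $\mu = \lambda/h^\vee \in \coweight/h^\vee$, then $\mu + v = (\lambda + h^\vee v)/h^\vee$; since $\coroot \subset \coweight$ we get $h^\vee v \in \coweight$ and hence $\lambda + h^\vee v \in \coweight$. Thus $\Waff$ preserves $\coweight/h^\vee$.

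\medskip

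\textbf{Step 3: Conclusion.} Given any alcove $A \in \A$, by Lemma~\ref{lem:simply-transitive} there is a unique $w \in \Waff$ with $A = w(A_\circ)$. Since $w$ is a bijection $V \to V$ preserving $\coweight/h^\vee$, the lattice points in $A$ are in bijection with those in $A_\circ$, and there is exactly one, namely $w(\rho/h^\vee)$.

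\medskip

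The main obstacle is the uniqueness argument in Step~1, which relies on the arithmetic identity $h^\vee = 1 + a_1 + \cdots + a_r$ together with the positivity of the coefficients $a_i$; everything else is a routine check using the fundamental coweight basis and the semidirect product structure of $\Waff$.
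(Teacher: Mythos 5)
Your proof is correct and follows essentially the same route as the paper's: reduce to the fundamental alcove using the $\Waff$-invariance of $\coweight/h^\vee$ together with simple transitivity, then compute the intersection in the fundamental coweight basis using $h^\vee = 1 + a_1 + \cdots + a_r$. You merely spell out two steps the paper leaves implicit (the lattice invariance under $W\ltimes\coroot$ and the arithmetic forcing $y_i=1$), which is fine.
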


\begin{proof}  Since the affine Weyl group acts simply transitively
on $\A$ and preserves the lattice $\coweight/h^\vee$, it is enough
to prove the claim for the fundamental alcove $A_\circ$. In the
basis of fundamental coweights $\omega_1,\dots,\omega_r$, the
fundamental alcove $A_\circ$ is given by the inequalities
$x_1,\dots,x_r>0$ and $a_1 x_1+\cdots +a_r x_r<1$; and the lattice
$\coweight/h^\vee$ is given by $x_1,\dots,x_r\in\Z/h^\vee$. Recall
that $h^\vee = a_0 + a_1+\cdots+a_r$. Then the intersection
$A_\circ\cap (\coweight/h^\vee)$ consists of a single point with
coordinates $(1/h^\vee,\dots,1/h^\vee)$. In other words, this
intersection point is $(\omega_1+\cdots+\omega_r)/h^\vee =
\rho/h^\vee$.
\end{proof}

For $A\in\A$, we call the single element of $(\coweight/h^\vee)\cap A$
the {\it central point\/} of the alcove $A$. Let $Z =
(\coweight/h^\vee) \setminus \bigcup H_{\alpha,k}$ be the set of
central points of all alcoves, equivalently,
$$
Z=\{\lambda\in V\mid h^\vee\cdot (\alpha,\lambda)\equiv
1,\dots,h^\vee-1 \ (\textrm{mod}\, h^\vee), \textrm{ for
}\alpha\in\Phi^+\}.
$$
The set $Z$ of central points is in one-to-one correspondence with
the set $\A$ of alcoves.

\section{Weyl's Formula for the order of the Weyl group}
Let $\Vol$ be the volume form on the space $V$ normalized by
$\Vol(A_\circ)=1$.  Then the volume of any alcove is 1 and the
volume $\Vol(P)$ of an alcoved polytope $P$ is the number of alcoves
in $P$.  Equivalently, $\Vol(P)=|P\cap Z|$.

Let $\Pi$ be the alcoved polytope given by
$$
\begin{array}{l}
\Pi=\{\lambda\in V\mid 0\leq (\lambda,\alpha_i)\leq 1,
\textrm{ for }i=1,\dots,r\}\\[.1in]
\quad\ =\{x_1\omega_1+\cdots+x_r\omega_r \mid 0\leq x_i\leq
1,\textrm{ for } i=1,\dots,r\},
\end{array}
$$
i.e., $\Pi$ is the parallelepiped generated by the fundamental
coweights $\omega_1,\dots,\omega_r$. This polytope is a fundamental
domain of the coweight lattice $\coweight$. Since $A_\circ$ is the
simplex with the vertices $0,\omega_1/a_1,\dots, \omega_r/a_r$, we
have
$$
\Vol(\Pi)=\Vol(\Pi)/\Vol(A_\circ)={r!\,a_1\cdots a_r}.
$$
Thus the parallelepiped $\Pi$ consists of $r!\,a_1\cdots a_r$
alcoves.

Let $H$ be the alcoved polytope given by
$$
H=\{\lambda\in V\mid -1\leq (\lambda,\alpha)\leq 1, \textrm{ for
}\alpha\in\Phi^+\}.
$$
The polytope $H$ consists of all alcoves adjacent to the origin $0$,
i.e., it consists of the $|W|$ alcoves of the form $w(A_\circ)$, for
$w\in W$. In particular, its volume is the order of the Weyl group:
$\Vol(H)=|W|$. Lemma~\ref{lem:semidirect-product} implies that the
polytope $H$ is a fundamental domain of the coroot lattice
$\coroot$. Thus $\Vol(H)/\Vol(\Pi)=|\coweight/\coroot|=f$ is the
index of connection.   This implies the well-known formula for the
order of the Weyl group, see~\cite[4.9]{Hum}:
\begin{equation}
\label{eq:Weylsformula} |W| = f\cdot r!\cdot a_1\cdots a_r.
\end{equation}


\section{The group $C$}
\label{sec:groupC} For an integral coweight $\lambda\in\coweight$,
the affine translation $A+\lambda$ of an alcove $A$ is an alcove;
and the affine translation $P+\lambda$ of an alcoved polytope $P$ is
an alcoved polytope.

Let us define the equivalence relation ``$\sim$''
on the affine Weyl group $\Waff$ by
$$
u\sim w\textrm{ if and only if } u(A_\circ)=w(A_\circ)+\lambda,
\textrm{ for some } \lambda\in\coweight,
$$
where $u,w\in \Waff$. The relation ``$\sim$'' is invariant with
respect to the left action of the affine Weyl group. According to
Lemma~\ref{lem:central-point}, this equivalence relation can be
defined in terms of central points of alcoves as
$$
u\sim w\textrm{ if and only if }
u(\rho/h^\vee)-w(\rho/h^\vee)\in\coweight.
$$
Let $C$ be the subset of the usual Weyl group $W$ given by
$$
C=\{w\in W\mid w\sim 1\} = \{w\in W\mid w(\rho)-\rho\in
h^\vee\coweight\}.
$$
Also let $\Caff=\{w\in \Waff\mid w\sim 1\}$. Actually, $C$ is a
subgroup in $W$ and $\Caff$ is a subgroup in $\Waff$. Indeed, $u\sim
1$ and $w\sim 1$ imply that $uw\sim u\sim 1$.

The coroot lattice $\coroot$ is a normal subgroup in $\Caff$.
The group $\Caff$ is the semidirect product $C\ltimes \coroot$ and,
thus, $C\simeq \Caff/\coroot$.

Equivalence classes of elements of the Weyl group (respectively, the
affine Weyl group) with respect to the relation ``$\sim$'' are
exactly cosets in $W/C$ (respectively, $\Waff/\Caff$). Since $\Pi$
is a fundamental domain of the coweight lattice $\coweight$ and, for
an alcove $A\in\A$, there is a translation $A+\lambda$ such that
$A+\lambda=w(A_\circ)$, for some $w\in W$, we deduce that there are
natural one-to-one correspondences between the followings sets:
$$
W/C\simeq \Waff/\Caff\simeq \A/\coweight \simeq \{\textrm{alcoves in
}\Pi\}.
$$
In particular, the number of cosets $|W/C|$ equals $\Vol(\Pi) =
|W|/f$ and, thus, the order of the group $C$ is $|C|=f$.

There is a natural bijection $b:\coweight\to \Caff$ given by
$b(\lambda)=w$ whenever $w(A_\circ)=A_\circ+\lambda$.
Notice that $b$ may not be a homomorphism of groups.  However the
map $\bar b:\coweight \to C$ given by the composition of $b$ with
the natural projection $\Caff\to \Caff/\coroot\simeq C$ is a
homomorphism. Indeed, let $\bar b(\lambda)  = u$ and $\bar b(\mu) =
w$. Then $u,w\in W$ are given by $u(A_\circ) \equiv A_\circ +
\lambda \,\mod \coroot$ and $w(A_\circ)\equiv A_\circ+\mu \,\mod
\coroot$. Then $uw(A_\circ) \equiv A_\circ + \lambda + u(\mu) \equiv
A_\circ+\lambda+\mu \,\mod \coroot$. The last equation follows from
Lemma~\ref{lem:Weylaction}. The kernel of the map $\bar b$ is
$\coroot$. Thus $\bar b$ induces the natural isomorphism of groups:
$$
\coweight/\coroot \simeq C.
$$
The group $C$ is the cyclic group $\Z/(n+1)\Z$ in type $A_n$, a group of order $2$ for types $B_n$, $C_n$ and $E_7$, a group of order $4$ for type $D_n$, a group of order $3$ for type $E_6$, and trivial for $E_8$, $F_4$, and $G_2$.

\section{The statistic $\cdes$}

Let us say that a root $\alpha\in\Phi$ is an {\it inversion\/} of
Weyl group element $w\in W$ if $w(\alpha)<0$.  Equivalently, a
positive root $\alpha$ in an inversion of $w$ if and only if
$\ell(ws_\alpha)<\ell(w)$. Let us define
$$
\inv_\alpha(w) = \left\{
\begin{array}{cl}
0 & \textrm{if } w(\alpha)>0,\\
1 &  \textrm{if } w(\alpha)<0.
\end{array}\right.
$$

\begin{lemma}
\label{lem:inversionalcove} We have
$\inv_\alpha(w)=-m_\alpha(w^{-1}(A_\circ))$.
\end{lemma}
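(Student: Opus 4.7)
The plan is to use the defining description of the fundamental alcove,
\[
A_\circ = \{\mu \in V \mid 0 < (\mu, \beta) < 1,\ \text{for all }\beta \in \Phi^+\},
\]
together with the definition of $m_\alpha$ as the unique integer with $m_\alpha < (\lambda, \alpha) < m_\alpha + 1$ for $\lambda$ in the alcove.

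The key observation is the change of variables $\mu = w(\lambda)$, which moves a point $\lambda \in w^{-1}(A_\circ)$ into the fundamental alcove, combined with the $W$-invariance of the inner product:
\[
(\lambda, \alpha) = (w^{-1}(\mu), \alpha) = (\mu, w(\alpha)).
\]
Thus, to locate $m_\alpha(w^{-1}(A_\circ))$ for a given positive root $\alpha$, we simply need to know which integers the quantity $(\mu, w(\alpha))$ ranges over as $\mu$ varies in $A_\circ$.

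I would then split into the two cases parametrized by $\inv_\alpha(w)$. If $w(\alpha) > 0$, then $w(\alpha) \in \Phi^+$, so the defining inequalities of $A_\circ$ give $0 < (\mu, w(\alpha)) < 1$, hence $m_\alpha(w^{-1}(A_\circ)) = 0 = -\inv_\alpha(w)$. If $w(\alpha) < 0$, then $-w(\alpha) \in \Phi^+$, and the same inequalities applied to $-w(\alpha)$ yield $-1 < (\mu, w(\alpha)) < 0$, so $m_\alpha(w^{-1}(A_\circ)) = -1 = -\inv_\alpha(w)$. In either case we obtain the claimed identity.

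There is no real obstacle here: the statement is essentially a bookkeeping exercise translating between the ``linear'' description of inversions via $w(\alpha)$ and the ``affine'' description via the integer coordinates $m_\alpha$ of the alcove $w^{-1}(A_\circ)$. The only thing to be careful about is applying $w^{-1}$ versus $w$ in the right direction so that the sign works out correctly, which is why the statement involves $w^{-1}(A_\circ)$ rather than $w(A_\circ)$.
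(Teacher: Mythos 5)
Your proof is correct and is essentially the paper's argument: both reduce the claim, via $W$-invariance of the inner product ($(\lambda,\alpha)=(w(\lambda),w(\alpha))$), to checking whether $(\mu,w(\alpha))$ lies in $(0,1)$ or $(-1,0)$ for $\mu\in A_\circ$, according to the sign of $w(\alpha)$. The paper merely phrases this through the pairing with $\rho$, so there is no substantive difference.
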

\begin{proof}
Indeed, $\alpha$ is an inversion of $w$ if and only if
$(w(\alpha),\rho)= (\alpha,w^{-1}(\rho))<0$, that is,
$-1<(\lambda,\alpha)<0$, for any $\lambda\in w^{-1}(A_\circ)$.
\end{proof}

Let $d_i(w) = \inv_{\alpha_i}(w)$, for $i=0,\dots,r$.  If $d_i(w) =
1$ we say that $w$ has a descent at $i$.

\begin{definition}
Let $w \in W$.  The \emph{circular descent number} $\cdes(w)$ is
defined by
$$
\cdes(w) = \sum_{i=0}^r a_i\,d_i(w).
$$
\end{definition}
Note that $\cdes(w)$ is always positive. Indeed if $d_i(w) = 0$ for
$i \in [1,r]$ then we have $w = 1$ and $d_0(w) = 1$.

Define $\delta_w \in \coroot$ for $w \in W$ by
\[
\delta_w = \sum_{i=0}^r d_i(w)\cdot\w_i
\]
where for convenience we let $\w_0 = 0$.

\begin{lemma}
\label{lem:delta} The coweight $\delta_w$ is the unique integral
coweight such that $w^{-1}(A_\circ) + \delta_w \in \Pi$.
\end{lemma}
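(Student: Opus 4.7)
The plan is to verify the containment $w^{-1}(A_\circ) + \delta_w \subset \Pi$ by a direct inequality check on each pairing $(\cdot, \alpha_i)$, and then to extract uniqueness from the fact that $\Pi$ is a fundamental parallelepiped for the coweight lattice $\coweight$.

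For existence I would fix $\mu \in w^{-1}(A_\circ)$ and $i \in \{1, \ldots, r\}$. Combining the general description of alcoves by integer shifts $m_\alpha(A) < (\lambda, \alpha) < m_\alpha(A) + 1$ for $\alpha \in \Phi^+$ with Lemma~\ref{lem:inversionalcove} gives $(\mu, \alpha_i) \in (-d_i(w),\, 1 - d_i(w))$. Using $(\w_j, \alpha_i) = \delta_{ij}$ together with the convention $\w_0 = 0$, one computes $(\delta_w, \alpha_i) = d_i(w)$ for each $i \in \{1,\ldots,r\}$, so adding this shift yields $(\mu + \delta_w, \alpha_i) \in (0,1)$. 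Since $\Pi$ is cut out by the inequalities $0 \le (\cdot, \alpha_i) \le 1$ for $i = 1, \ldots, r$, this places $w^{-1}(A_\circ) + \delta_w$ inside (in fact, in the interior of) $\Pi$.

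For uniqueness I would use that $\Pi$ is a fundamental domain for translation by $\coweight$, so its interior lattice-translates tile $V$ disjointly; hence a nonempty open set such as an alcove can have at most one lattice translate contained in $\Pi$. The only real obstacle is bookkeeping: one must check carefully that the $i=0$ term $d_0(w)\,\w_0$ in the definition of $\delta_w$ genuinely drops out (which it does, since $\w_0 = 0$) and that the simple-root pairings produced by Lemma~\ref{lem:inversionalcove} line up exactly with the integer shift needed to move $w^{-1}(A_\circ)$ into $\Pi$. No deeper ingredient appears to be required.
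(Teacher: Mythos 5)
Your proposal is correct and follows essentially the same route as the paper: existence via Lemma~\ref{lem:inversionalcove} giving $(\mu,\alpha_i)\in(-d_i(w),1-d_i(w))$ and the shift $(\delta_w,\alpha_i)=d_i(w)$, and uniqueness from the parallelepiped structure of $\Pi$. Your uniqueness step is phrased through disjointness of interior lattice translates of $\Pi$, while the paper says directly that adding or subtracting a fundamental coweight breaks the strict inequalities $0<(\ll,\alpha_i)<1$; these are the same observation.
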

\begin{proof}
Let $i \in [1,r]$.  Then by Lemma~\ref{lem:inversionalcove},
$-d_i(w)<(w^{-1}(\rho/h^\vee),\alpha_i)< 1-d_i(w)$ so that $\ll =
w^{-1}(A_\circ) + \delta_w $ satisfies $0 \leq (\ll,\alpha_i) \leq
1$.  The coweight $\delta_w$ must be unique since adding or
subtracting any fundamental coweight $\w_i$ will cause $\ll$ to
violate the inequality $0< (\ll,\alpha_i) < 1$.
\end{proof}

We set $\AA = \set{\alpha_0,\ldots,\alpha_r}$ and $\AA_i = \set{\alpha_j
\in \AA \mid a_j = i}$. For convenience we set $J = \set{j \in [0,r]
\mid a_j = 1}$.

\begin{prop}
\label{prop:differentC} We have the following equivalent
descriptions of the group $C \subset W$.
\begin{align}
\label{eq:c1}
C &= \set{w \in W \mid \cdes(w) = 1} \\
\label{eq:c2}
  &= \set{w \in W \mid w(\AA) = \AA} \\
\label{eq:c3}
  &= \set{w \in W \mid w(\AA_k) = \AA_k \mbox{ for all $k$ }}.
\end{align}
For any $j \in J$ there exist a unique Weyl group element
$w^{(j)}\in C$ such that $w(\alpha_i)>0$, for $i\ne j$ and
$w(\alpha_j) < 0$.
\end{prop}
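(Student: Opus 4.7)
My plan is to establish the cycle $C\Rightarrow(\ref{eq:c2})\Rightarrow(\ref{eq:c3})\Rightarrow(\ref{eq:c1})\Rightarrow C$, and then prove the final sentence by combining injectivity of a natural map $C\to J$ with a counting argument.

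For $C\Rightarrow(\ref{eq:c2})$: if $w\in C$ then $w(A_\circ)=A_\circ+\lambda$ for some $\lambda\in\coweight$; the outer facet normals of $A_\circ$ and of $A_\circ+\lambda$ both form the set $\{-\alpha_1,\ldots,-\alpha_r,\theta\}=-\AA$, while those of $w(A_\circ)$ form $-w(\AA)$, so $w(\AA)=\AA$.  For $(\ref{eq:c2})\Rightarrow(\ref{eq:c3})$: write $w(\alpha_i)=\alpha_{\sigma(i)}$ and apply $w$ to the relation $\sum_i a_i\alpha_i=0$ to obtain $\sum_k a_{\sigma^{-1}(k)}\alpha_k=0$; since $\AA$ spans $V$, its space of linear dependences is one-dimensional, so $a_{\sigma^{-1}(k)}=c\,a_k$ for some scalar $c$, and summing in $k$ forces $c=1$.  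For $(\ref{eq:c3})\Rightarrow(\ref{eq:c1})$: the unique element of $\AA$ sent to the unique negative element $\alpha_0$ is some $\alpha_j$ with $a_j=a_0=1$ by (\ref{eq:c3}); then $d_j(w)=1$ and $d_i(w)=0$ for $i\ne j$, so $\cdes(w)=a_j=1$.

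The main obstacle is $(\ref{eq:c1})\Rightarrow C$, where the algebraic hypothesis must be translated into alcove geometry.  If $\cdes(w)=1$ then the unique descent index $j$ satisfies $a_j=1$ and $\delta_w=\w_j$ (with the convention $\w_0=0$).  A short calculation using $d_0(w)=1-\inv_\theta(w)$ and $(\w_i,\theta)=a_i$ for $i\geq 1$ yields the key identity $(\delta_w,\theta)=\inv_\theta(w)$, so by Lemma~\ref{lem:inversionalcove} every $\mu\in w^{-1}(A_\circ)$ satisfies $(\mu+\delta_w,\theta)\in(0,1)$.  Combined with Lemma~\ref{lem:delta}, this forces $w^{-1}(A_\circ)+\delta_w=A_\circ$, so $w^{-1}(A_\circ)=A_\circ-\delta_w$ is a coweight translate of $A_\circ$ and $w\in C$.

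For the last sentence, define $\phi:C\to J$ by $\phi(w)=j$ where $j$ is the unique index with $w(\alpha_j)=\alpha_0$; well-definedness follows from (\ref{eq:c3}).  Injectivity: if $\phi(w)=\phi(w')$ then $u=w(w')^{-1}\in C$ fixes $\alpha_0$, so under the duality between affine simple roots and vertices of $A_\circ$ (the root $\alpha_0$ is dual to $v_0=0$, and $\alpha_i$ is dual to $v_i=\w_i/a_i$ for $i\geq 1$) the associated permutation on the vertices fixes $v_0$.  Writing $u(A_\circ)=A_\circ+\lambda$ yields $u(v_0)=v_{\sigma_u(0)}+\lambda=\lambda$; since $u(v_0)=u(0)=0$ we get $\lambda=0$ and $u(A_\circ)=A_\circ$, forcing $u=1$ by Lemma~\ref{lem:simply-transitive}.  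For surjectivity, each $w\in C$ is determined by the alcove $w(A_\circ)=A_\circ+\lambda\subset H$; the condition $A_\circ+\lambda\subset H$ is equivalent to $-\lambda\in\overline{A_\circ}\cap\coweight$, and a direct check using the vertex description of $\overline{A_\circ}$ shows this intersection equals $\{0\}\cup\{\w_j:j\in J\setminus\{0\}\}$, of size $|J|$.  Hence $|C|=|J|$ and the injection $\phi$ is a bijection, yielding existence and uniqueness of the required $w^{(j)}$ for each $j\in J$.
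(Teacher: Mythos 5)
Your proof is correct, and while it draws on the same toolkit as the paper (Lemmas \ref{lem:inversionalcove} and \ref{lem:delta}, simple transitivity, the uniqueness of the dependence $\sum_i a_i\alpha_i=0$, and $(\w_i,\theta)=a_i$), it is organized differently. The paper first proves (\ref{eq:c1}) in both directions by comparing central points, using $(\rho/h^\vee,\theta)=1-1/h^\vee$, and only then deduces (\ref{eq:c2}) from $c\in C$ together with the already-known fact $\delta_c=\w_j$, via the induced permutation of the vertices $0,\w_1/a_1,\dots,\w_r/a_r$ and duality with the simple roots; you instead run the cycle $C\Rightarrow(\ref{eq:c2})\Rightarrow(\ref{eq:c3})\Rightarrow(\ref{eq:c1})\Rightarrow C$, and your facet-normal argument gets $C\Rightarrow(\ref{eq:c2})$ straight from the definition of $C$ without any prior knowledge of $\cdes(c)$, which is a nice streamlining. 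Your step $(\ref{eq:c1})\Rightarrow C$ is essentially the paper's ``converse follows in the same manner,'' made explicit through the identity $(\delta_w,\theta)=\inv_\theta(w)$ and the simple-root description of $A_\circ$; note only that what you literally obtain is $w^{-1}(A_\circ)=A_\circ-\delta_w$, i.e.\ $w^{-1}\in C$, so you should add the one-line remark that $C$ is a subgroup (or apply $w$ to get $w(A_\circ)=A_\circ+w(\delta_w)$ with $w(\delta_w)\in\coweight$) to conclude $w\in C$. For the final sentence the paper is terse (``follows from this discussion''), and your argument is more complete: injectivity of $\phi:C\to J$ via the vertex--wall duality (or, even more simply, via $\delta_u=0$ forcing $u^{-1}(A_\circ)=A_\circ$), and surjectivity by the count $|C|=\#\{\lambda\in\coweight\mid A_\circ+\lambda\subset H\}=|\overline{A_\circ}\cap\coweight|=|J|$, which also yields $|C|=\#\{i\mid a_i=1\}$ independently of the bijection; this is a genuine, if modest, gain in completeness over the paper's treatment of that claim.
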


\begin{proof}
Let $c \in C$.  By Lemma~\ref{lem:delta} and the definition of $C$,
we see that $c^{-1}(A_\circ) + \delta_c = A_\circ$.  By
Lemma~\ref{lem:central-point}, this implies that
$c^{-1}(\rho/h^\vee) + \delta_c = \rho/h^\vee$.  It is clear that
$(\delta_c, \theta) = \cdes(c) - d_0(c)$.  We compute, using
Lemma~\ref{lem:inversionalcove}, that
\begin{align*}
d_0(c) - 1 \leq (c^{-1}(\rho/h^\vee),\theta) \leq d_0(c).
\end{align*}
So summing we have
\begin{align*}
\cdes(c) - 1 \leq (\rho/h^\vee, \theta ) \leq \cdes(c)
\end{align*}
which immediately implies that $\cdes(c) = 1$ since
$(\rho/h^\vee,\theta) = 1-1/h^\vee$.  The converse follows in the
same manner.  This establishes the equality in (\ref{eq:c1}).

Now suppose $c \in C$.  We establish (\ref{eq:c2}).  By definition, $c(\alpha_j) < 0$ for some $j \in J$ and $c(\alpha_i) > 0$ for $i
\neq j$.  Let $\AA_{\neq j} = \set{\alpha_i \in \AA | i \neq j}$.  We have $c^{-1}(A_\circ)+\omega_j = A_\circ$, and in particular the set $\{0,\omega_1/a_1,\ldots,\omega_r/a_r\}$ is sent to itself under the map $\lambda \mapsto c^{-1}(\lambda) + \omega_j$.  Substituting this fact into $(c(\alpha_i),\omega_k/a_k) = (\alpha_i,c^{-1}(\omega_k/a_k))$ and noting that $\{\omega_1,\ldots,\omega_n\}$ are a dual basis to $\{\alpha_1,\ldots,\alpha_n\}$, we deduce that $c(\AA_{\neq j}) = \AA_{\neq 0}$, and $c(\alpha_j) = \alpha_0$.  Thus (\ref{eq:c2}) holds.

We get (\ref{eq:c3}) from (\ref{eq:c2}) by
noting that up to scalar multiplication the relation $\sum_i a_i
\alpha_i = 0$ is the only linear dependence amongst the roots in $\AA$.


Conversely, (\ref{eq:c3}) clearly implies (\ref{eq:c1}) by the
definition of $\cdes$.  The last statement of the proposition also
follows from this discussion.
\end{proof}

By property (\ref{eq:c1}), we have  $f = |C| = \#\{i \in [0,r] \mid
a_i = 1\}$.  These $i$'s correspond to minuscule coweights
$\w_i$.  Recall that a minuscule weight is one whose weight
polytope has no internal weights.

We remark that the group $C$ was previously studied by
Verma~\cite{Ver} but not in the current context of the statistic
$\cdes$. The group $C$ is related to the statistic $\cdes$ on the
whole of $W$ in an intimate way.

\begin{theorem}\label{thm:cdes}
The statistic $\cdes$ is constant on the double cosets $C\backslash
W/C$.
\end{theorem}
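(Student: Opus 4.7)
The plan is to split the double-coset invariance into the two one-sided statements $\cdes(wc)=\cdes(w)$ and $\cdes(cw)=\cdes(w)$ for every $c\in C$, and handle them by very different techniques.  These two halves are genuinely independent: $C$ is not normal in $W$ in general, so neither invariance can be obtained from the other by conjugation.

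For invariance under right multiplication I would argue directly from Proposition~\ref{prop:differentC}(\ref{eq:c3}).  Every $c\in C$ permutes $\AA$ via some bijection $\sigma:\{0,\dots,r\}\to\{0,\dots,r\}$ with $c(\alpha_i)=\alpha_{\sigma(i)}$ and $a_{\sigma(i)}=a_i$.  Consequently $d_i(wc)=d_{\sigma(i)}(w)$, and reindexing the defining sum gives
\[
\cdes(wc) = \sum_i a_i\, d_{\sigma(i)}(w) = \sum_j a_{\sigma^{-1}(j)}\,d_j(w) = \cdes(w),
\]
using that the multiset $\{a_j\}$ is $\sigma$-invariant.

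For invariance under left multiplication I would argue geometrically.  Using $a_0=1$, $a_i=(\omega_i,\theta)$ for $i\ge 1$, and $\omega_0=0$ rewrites the definition as $\cdes(w) = d_0(w) + (\delta_w,\theta)$.  Since $\alpha_0=-\theta$, a small extension of Lemma~\ref{lem:inversionalcove} yields $d_0(w) = 1 + m_\theta(w^{-1}(A_\circ))$, so the unique alcove $A_w := w^{-1}(A_\circ)+\delta_w \subset \Pi$ produced by Lemma~\ref{lem:delta} satisfies $m_\theta(A_w) = \cdes(w)-1$.  In particular $\cdes(w)$ is determined by the alcove $A_w$.  Finally, for $c\in C$ we have $c^{-1}(A_\circ) = A_\circ + \mu$ for some $\mu\in\coweight$, so
\[
(cw)^{-1}(A_\circ) \;=\; w^{-1}(A_\circ) + w^{-1}(\mu),
\]
and since $w^{-1}(\mu)\in\coweight$ this is a coweight translate of $w^{-1}(A_\circ)$.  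The uniqueness clause of Lemma~\ref{lem:delta} then forces $A_{cw}=A_w$, and hence $\cdes(cw)=\cdes(w)$.

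The main obstacle is establishing the geometric identity $\cdes(w) = m_\theta(A_w)+1$.  It requires a careful treatment of the affine simple root $\alpha_0=-\theta$ in Lemma~\ref{lem:inversionalcove} (since $-\theta\notin\Phi^+$) and the observation that the coefficients $a_i$ defining $\cdes$ are precisely the ones needed to convert the $\delta_w$-translation into an integer shift in the $\theta$-direction.  Once that identity is in hand, the left-invariance reduces to the essentially tautological fact that left multiplication by $C$ shifts the alcove $w^{-1}(A_\circ)$ by a coweight, and combining this with the combinatorial right-invariance yields that $\cdes$ is constant on each double coset $CwC$.
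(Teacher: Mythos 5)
Your proof is correct, and half of it takes a genuinely different route from the paper's. The right-coset half, $\cdes(wc)=\cdes(w)$ via the $a_i$-preserving permutation of $\AA$ coming from Proposition~\ref{prop:differentC}(\ref{eq:c3}), is exactly the paper's (one-line) argument. For the left-coset half the paper stays on the root side: it expands $w(\alpha)=\sum_k b_k\alpha_k$, substitutes $c(\alpha_j)=\alpha_0=-\sum_k a_k\alpha_k$, uses the highest-root bound $|a_k|\ge |b_k|$ to get the pointwise relation $d_\alpha(w)-d_\alpha(cw)=b_j(\alpha)$, and then sums against the $a_i$ using that the coefficient of $\alpha_j$ in $\sum_i a_i w(\alpha_i)=0$ vanishes. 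You instead prove the geometric identity $\cdes(w)=1+m_\theta\bigl(w^{-1}(A_\circ)+\delta_w\bigr)$, which checks out: applying Lemma~\ref{lem:inversionalcove} to $\theta$ and flipping the sign for $\alpha_0=-\theta$ gives $d_0(w)=1+m_\theta(w^{-1}(A_\circ))$, and translating by $\delta_w$ shifts $m_\theta$ by $(\delta_w,\theta)=\sum_{i\ge 1}a_i d_i(w)$; then $(cw)^{-1}(A_\circ)$ is a coweight translate of $w^{-1}(A_\circ)$, so the uniqueness in Lemma~\ref{lem:delta} forces $A_{cw}=A_w$ and hence $\cdes(cw)=\cdes(w)$. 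Both arguments are complete. Yours is more conceptual and proves the stronger fact that the canonical translate $A_w\subset\Pi$ depends only on the left coset $Cw$, i.e.\ $A_w\subset \Delta^\Phi_{\cdes(w)}$; this is essentially the mechanism the paper deploys later, in the section on generalized hypersimplices, to show that $\cdes(w^{-1})$ is constant on cosets $\bar w\in W/C$, so your proof anticipates that material. The paper's root-coefficient computation, by contrast, is purely combinatorial at this point of the development and records the finer descent-by-descent bookkeeping $d_\alpha(w)-d_\alpha(cw)=b_j(\alpha)$, which the geometric argument does not exhibit.
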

\begin{proof}
Let $w \in W$ and $c \in C$ so that $c(\alpha_j) = \alpha_0$ for $j
\in J$.  We need to prove that $\cdes(cw) = \cdes(w) = \cdes(wc)$.
The latter equality is immediate from condition (\ref{eq:c3}) of
Proposition~\ref{prop:differentC}.

Let $\alpha \in \AA$ and let $\beta = w(\alpha) = b_1 \alpha_1 +
\cdots + b_r \alpha_r$. The $b_i$ are either all positive or all
negative.  The element $w$ has a descent at $\alpha$ if and only if
$\beta < 0$.  Now
\begin{equation}
\label{eq:cw} cw(\alpha) = c(\beta) = b_1 c(\alpha_1) + \cdots + b_r
c(\alpha_r).
\end{equation}
If $b_j = 0$ then clearly $d_\alpha(w) = d_\alpha(cw)$.  If $b_j
\neq 0$, then we have a term of the form
\begin{equation}
\label{eq:bj} b_j c(\alpha_j) = b_j \alpha_0 = - b_j (a_1 \alpha_1 +
\cdots + a_r \alpha_r)
\end{equation}
in equation (\ref{eq:cw}).  Since $-\alpha_0$ is the longest root we
have $|a_k| \geq |b_k|$. Thus substituting (\ref{eq:bj}) into
(\ref{eq:cw}) we see that $b_j \neq 0$ implies that $d_\alpha(w) = 1
- d_\alpha(cw)$.  Indeed $b_j \in \set{0,1,-1}$ and we have
$d_\alpha(w) - d_\alpha(cw) = b_j$.

Now we have
\[
0 = a_0 w(\alpha_0) + \cdots + a_r w(\alpha_r)
\]
and so expressing both sides in terms of the simple roots $\alpha_1,
\ldots, \alpha_r$ we see that the coefficient of $\alpha_j$ is 0.
Write $b_j(\alpha)$ for the coefficient $b_j$ in the proof earlier,
obtaining the equality
\[ 0 = \sum_{i=0}^r a_i b_j(\alpha_i) = \sum_{i = 0}^r \left(d_i(cw) - d_i(w)\right) = \cdes(cw) -\cdes(w).\]
\end{proof}

\section{The map $\cmaj$}

Define the {\it circular major map} $\cmaj: W \rightarrow C$ by
\[
\cmaj(w) = \bar b(\delta_w)
\]
where $\bar b$ is the isomorphism from Section~\ref{sec:groupC}.

\begin{lemma}
The map $\cmaj$ satisfies $\cmaj(c) = c$ for $c \in C$.
\end{lemma}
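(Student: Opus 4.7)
The plan is to unwind the definitions of $\bar b$ and $\delta_c$ and exploit the fact that the failure of $b$ to be a homomorphism is exactly controlled by Lemma~\ref{lem:Weylaction}.

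First, I would pin down the alcove $c^{-1}(A_\circ)$ for $c \in C$. By definition $c \sim 1$, so $c(A_\circ) = A_\circ + \lambda$ for some $\lambda \in \coweight$. Applying $c^{-1}$ and using the intertwining $c^{-1} \circ T_\lambda = T_{c^{-1}(\lambda)} \circ c^{-1}$, we get $c^{-1}(A_\circ) = A_\circ - c^{-1}(\lambda)$. Since $A_\circ \subset \Pi$ and $c^{-1}(\lambda) \in \coweight$, the uniqueness statement in Lemma~\ref{lem:delta} forces $\delta_c = c^{-1}(\lambda)$, i.e.
\[ c^{-1}(A_\circ) + \delta_c = A_\circ, \qquad \text{equivalently} \qquad c(A_\circ) = A_\circ + c(\delta_c). \]

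Next, I would compare $c$ with $b(\delta_c)$ inside $\Waff$. By definition $b(\delta_c)(A_\circ) = A_\circ + \delta_c$, while the identity above says $c(A_\circ) = A_\circ + c(\delta_c)$. Thus
\[ c(A_\circ) = b(\delta_c)(A_\circ) + \bigl(c(\delta_c) - \delta_c\bigr). \]
By Lemma~\ref{lem:Weylaction}, $c(\delta_c) - \delta_c \in \coroot$, so $c$ and $b(\delta_c)$ send $A_\circ$ to alcoves differing by a coroot translation. Simple transitivity of $\Waff$ on alcoves (Lemma~\ref{lem:simply-transitive}) then gives $c \equiv b(\delta_c) \pmod{\coroot}$ in $\Caff$.

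Projecting through $\Caff \twoheadrightarrow \Caff/\coroot \simeq C$ yields $\bar b(\delta_c) = c$, which is exactly $\cmaj(c) = c$. The only real subtlety is keeping straight the linear/affine bookkeeping (the identity $w \circ T_\mu = T_{w(\mu)} \circ w$) and recognizing that Lemma~\ref{lem:Weylaction} is precisely the ingredient that makes $\bar b$ a homomorphism, and hence makes this verification essentially automatic.
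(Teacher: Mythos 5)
Your proof is correct and follows essentially the same route as the paper: use Lemma~\ref{lem:delta} (together with $c\sim 1$) to get $c(A_\circ)=A_\circ+c(\delta_c)$, then Lemma~\ref{lem:Weylaction} to replace $c(\delta_c)$ by $\delta_c$ modulo $\coroot$, and conclude $\bar b(\delta_c)=c$ from the definition of $\bar b$. The extra bookkeeping you supply (uniqueness in Lemma~\ref{lem:delta} giving $\delta_c=c^{-1}(\lambda)$, and simple transitivity giving $c\equiv b(\delta_c)\bmod\coroot$) just makes explicit what the paper leaves implicit.
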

\begin{proof}
By Lemma~\ref{lem:delta}, we have $c(A_\circ) = A_\circ +
c(\delta_c)$. But by Lemma~\ref{lem:Weylaction}, $c(\delta_c) =
\delta_c \mod \coroot$ so that by definition $\bar b(\delta_c) = c$.
\end{proof}

\begin{theorem}
\label{thm:cmajcdes} The map $\cmaj$ satisfies
\[
\cmaj(c_1 w c_2) = c_1 w c_2^{\cdes(w)}
\]
for $c_i \in C$ and $w \in W$.
\end{theorem}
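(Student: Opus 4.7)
The plan is to establish the theorem by splitting it into two identities
\[
\cmaj(c_1 w) = c_1\,\cmaj(w) \qquad \text{and} \qquad \cmaj(wc_2) = \cmaj(w)\,c_2^{\cdes(w)},
\]
and then chaining them via $\cmaj(c_1(wc_2)) = c_1\,\cmaj(wc_2)$. (The stated right-hand side $c_1 w c_2^{\cdes(w)}$ appears to be a typo for $c_1\,\cmaj(w)\,c_2^{\cdes(w)}$, since otherwise it need not even lie in $C$.) The tools are: abelianness of $C \simeq \coweight/\coroot$, so that $\bar b$ is a homomorphism into an abelian group; the fact (immediate from Lemma~\ref{lem:Weylaction}) that $\bar b(v(\lambda)) = \bar b(\lambda)$ for every $v \in W$; uniqueness in Lemma~\ref{lem:delta}; and the identification $\delta_{c} = \w_{j(c)}$ for $c \in C$, where $j(c)$ is the unique index with $c(\alpha_{j(c)}) = \alpha_0$ (Proposition~\ref{prop:differentC}).

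For the left identity, the preceding lemma rephrases as $c_1^{-1}(A_\circ) = A_\circ - \delta_{c_1}$. Applying the linear map $w^{-1}$ yields $(c_1 w)^{-1}(A_\circ) = w^{-1}(A_\circ) - w^{-1}(\delta_{c_1})$, and the uniqueness in Lemma~\ref{lem:delta} then forces
\[
\delta_{c_1 w} = \delta_w + w^{-1}(\delta_{c_1}).
\]
Applying $\bar b$ and using Lemma~\ref{lem:Weylaction} to drop the $w^{-1}$ modulo $\coroot$ gives $\cmaj(c_1 w) = \cmaj(w)\,c_1 = c_1\,\cmaj(w)$.

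The right identity is the heart of the proof. Writing $c_2(\alpha_i) = \alpha_{\sigma(i)}$ for the permutation $\sigma$ of $\{0,\ldots,r\}$ induced by $c_2$ and setting $j_0 = \sigma^{-1}(0)$, one has $d_i(wc_2) = d_{\sigma(i)}(w)$ directly from the definition of $\inv$. The key technical input is the expansion
\[
c_2^{-1}(\w_j) = \w_{\sigma^{-1}(j)} - a_j\,\w_{j_0}, \qquad j = 1, \ldots, r \quad (\w_0 := 0),
\]
obtained by pairing both sides with each $\alpha_i$ and using $(\w_j, \alpha_0) = -a_j$ (since $\alpha_0 = -\theta$). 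Substituting this into $c_2^{-1}(\delta_w)$, reindexing the sum with $k = \sigma^{-1}(j)$, and comparing term-by-term with $\delta_{wc_2} = \sum_{i=1}^r d_{\sigma(i)}(w)\,\w_i$ leaves only the coefficient of $\w_{j_0}$ to track; combining the contributions there produces the compact identity
\[
\delta_{wc_2} = c_2^{-1}(\delta_w) + \cdes(w)\,\w_{j_0}.
\]
Applying $\bar b$, and using $\bar b(c_2^{-1}(\delta_w)) = \bar b(\delta_w) = \cmaj(w)$ together with $\bar b(\w_{j_0}) = \bar b(\delta_{c_2}) = c_2$, yields $\cmaj(wc_2) = \cmaj(w)\,c_2^{\cdes(w)}$. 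The main obstacle is precisely the appearance of the factor $\cdes(w) = \sum_i a_i d_i(w)$ in this last computation: the affine descent $d_0(w)$ and the ordinary descents contribute asymmetrically to $\delta_{wc_2}$ and to $c_2^{-1}(\delta_w)$, and the affine relation $\sum_{i=0}^r a_i \alpha_i = 0$ is exactly what fuses them into a single term proportional to $\w_{j_0}$.
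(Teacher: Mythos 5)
Your proposal is correct, and your diagnosis of the statement is right: the paper's own proof establishes $\cmaj(c_1wc_2)=c_1\,\cmaj(w)\,c_2^{\cdes(w)}$, so the displayed right-hand side is a misprint. Structurally you and the paper agree: both reduce to the one-sided identities $\cmaj(cw)=c\,\cmaj(w)$ and $\cmaj(wc)=\cmaj(w)\,c^{\cdes(w)}$, and both of these come down to the coweight identities $\delta_{cw}=\delta_w+w^{-1}(\delta_c)$ and $\delta_{wc}=c^{-1}(\delta_w)+\cdes(w)\,\w_j$, followed by applying $\bar b$ and Lemma~\ref{lem:Weylaction}. Where you genuinely differ is in how the second, crucial identity is proved. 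The paper tracks the central point: it applies $c^{-1}$ to $w^{-1}(\rho/h^\vee)+\delta_w=\mu/h^\vee$ and uses Lemma~\ref{lem:inversionalcove} to bound $(\mu/h^\vee,\alpha_0)$ between $-\cdes(w)$ and $1-\cdes(w)$, so that adding $\cdes(w)\,\w_j$ restores the single violated inequality; uniqueness of the central point in $\Pi$ then pins down $\delta_{wc}$. You instead compute algebraically, using $d_i(wc)=d_{\sigma(i)}(w)$ for the permutation $\sigma$ of $\{0,\dots,r\}$ furnished by Proposition~\ref{prop:differentC}, plus the expansion $c^{-1}(\w_j)=\w_{\sigma^{-1}(j)}-a_j\,\w_{j_0}$ (verified by pairing with the simple roots); the bookkeeping does close: the $\w_{j_0}$-coefficient of $\delta_{wc}-c^{-1}(\delta_w)$ is $d_0(w)+(\cdes(w)-d_0(w))=\cdes(w)$, every other coefficient matches, and the degenerate case $c=\id$, $j_0=0$ is absorbed by the convention $\w_0=0$. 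Your route avoids the central-point machinery of Lemma~\ref{lem:central-point} at this step, makes the exact (not merely mod $\coroot$) formula for $\delta_{wc}$ transparent, and incidentally corrects the paper's misprint of $c(\delta_w)$ for $c^{-1}(\delta_w)$ in its own display; the paper's argument is shorter because the inequality for $(\mu/h^\vee,\alpha_0)$ produces the factor $\cdes(w)$ in one stroke, whereas yours isolates it via the affine relation $\sum_i a_i\alpha_i=0$.
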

\begin{proof}
Let $w \in W$ and $c = w^{(j)} \in C$, so that $c^{-1}(\alpha_0) =
\alpha_j$.  Thus $c(\alpha_j) < 0$ so that $\cmaj(c) = \w_j$.  We
first consider $wc$. We have
\[
w^{-1}(\rho/h^\vee) + \delta_w = \mu/h^\vee
\]
for some $\mu \in \Lambda$ satisfying $0 < (\mu, \alpha_i) <
h^\vee$ for $i \in [1,r]$. Applying $c^{-1}$ on the left to both
sides we obtain
\[
c^{-1}w^{-1}(\rho/h^\vee) + c^{-1}(\delta_w) = c^{-1}(\mu/h^\vee).
\]
Now $0 < (c^{-1}(\mu/h^\vee), c^{-1}(\alpha_i)) < 1$ and as $i$
varies, we obtain every simple root in the form $c^{-1}(\alpha_i)$
apart from $c(\alpha_0) = \alpha_j$ for some $j \in J$ say.  But $-
\cdes(w) < (\mu/h^\vee, \alpha_0) < 1 - \cdes(w)$, so we have $0 <
(c^{-1}(\mu/h^\vee)+ \cdes(w)\,.\,\w_j, \alpha_j) < 1$.

Thus
\begin{align*}
\delta_{wc} &= c(\delta_w) + \cdes(w)\,.\,\w_j \\
\cmaj(wc) &\equiv \cmaj(w) + \cdes(w)\,.\,\w_j \ \mod \coroot.
\end{align*}
Hence $\cmaj(wc) = \cmaj(w)c^{\cdes(w)}$.  We have used
Lemma~\ref{lem:Weylaction}.

Similarly,
\[
c^{-1}(\rho/h^\vee) + \delta_c = \rho/h^\vee
\]
gives
\begin{align*}
w^{-1}c^{-1}(\rho/h^\vee) + w^{-1}(\w_j) &= w^{-1}(\rho/h^\vee) \\
w^{-1}c^{-1}(\rho/h^\vee) + w^{-1}(\w_j) + \delta_w &=
w^{-1}(\rho/h^\vee) + \delta_w = \rho/h^\vee.
\end{align*}
 This implies that
\begin{align*}
\delta_{cw} &= w^{-1}(\w_j) + \delta_w \\
\cmaj(cw) &= \cmaj(w) + \w_j \ \mod \coroot.
\end{align*}
Hence $\cmaj(cw) = c\cdot \cmaj(w)$.

\end{proof}
Theorem~\ref{thm:cmajcdes} shows that the map $\cmaj$ allows us to
pick representatives for the right cosets $W/C$.  For example
$\set{w \mid \cmaj(w) = \id}$ is a set of right coset
representatives.  In type $A$, $\cmaj$ has an explicit
representation theoretic meaning, see
Theorem~\ref{thm:inducedcycliccharacter}.

\section{Relation between volumes and numbers of lattice points}

Let $P$ be an alcoved polytope, and let $A\in\A$ be an alcove. Let
$k_\alpha=k_\alpha(P)$, $K_\alpha=K_\alpha(P)$, and
$m_\alpha=m_\alpha(A)$, for $\alpha\in\Phi^+$ be as in
Section~\ref{sec:alcoved}. Let us define the alcoved polytope
$P_{(A)}$ as
$$
P_{(A)} = \{\lambda\in V\mid
k_\alpha-m_\alpha\leq(\lambda,\alpha)\leq
K_\alpha-m_\alpha-1,\textrm{ for all }\alpha\in\Phi^+\}.
$$

The following claim follows directly from the definitions.

\begin{lemma}  For $P$ and $A$ as above, the set
$P_{(A)}\cap \coweight$ of lattice points in $P_{(A)}$ is exactly
the set of integral coweights $\lambda\in\coweight$ such that
$A+\lambda$ is an alcove in $P$. \label{lem:A+lambda}
\end{lemma}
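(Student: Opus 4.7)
The plan is to simply unpack the definitions, since (as the excerpt notes) this is immediate. The key observation I would make up front is that translation by an integral coweight $\lambda\in\coweight$ permutes the affine hyperplanes $H_{\alpha,k}$: for any root $\alpha$, we have $(\lambda,\alpha)\in\Z$, so $H_{\alpha,k}+\lambda=H_{\alpha,k+(\lambda,\alpha)}$. Consequently $A+\lambda$ is again an alcove, and if $A=\{\mu\in V\mid m_\alpha<(\mu,\alpha)<m_\alpha+1,\ \alpha\in\Phi^+\}$ then
\[
A+\lambda=\{\nu\in V\mid m_\alpha+(\lambda,\alpha)<(\nu,\alpha)<m_\alpha+(\lambda,\alpha)+1,\ \alpha\in\Phi^+\},
\]
i.e.\ $m_\alpha(A+\lambda)=m_\alpha+(\lambda,\alpha)$.

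Next I would translate the condition ``$A+\lambda$ is an alcove in $P$'' into inequalities. Since $P=\{\mu\in V\mid k_\alpha\le(\mu,\alpha)\le K_\alpha,\ \alpha\in\Phi^+\}$, the closed alcove $\overline{A+\lambda}$ is contained in $P$ if and only if for every $\alpha\in\Phi^+$ the interval $[m_\alpha+(\lambda,\alpha),\ m_\alpha+(\lambda,\alpha)+1]$ is contained in $[k_\alpha,K_\alpha]$. This is equivalent to the two inequalities
\[
k_\alpha-m_\alpha\le(\lambda,\alpha)\le K_\alpha-m_\alpha-1
\qquad\textrm{for all }\alpha\in\Phi^+,
\]
which are exactly the defining inequalities of $P_{(A)}$. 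Thus $A+\lambda\subset P$ iff $\lambda\in P_{(A)}$.

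Finally I would note that the $\lambda$'s under consideration are automatically integral coweights on both sides: on the right because $\lambda$ is required to lie in $\coweight$ for $A+\lambda$ to be an alcove of the affine arrangement, and on the left because we are intersecting $P_{(A)}$ with $\coweight$. Combining, $\lambda\in P_{(A)}\cap\coweight$ if and only if $A+\lambda$ is an alcove contained in $P$, proving the lemma.

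There is no real obstacle here; the only subtle point worth stating cleanly is the first one, that translations by $\coweight$ (not merely by $\coroot$) preserve the affine hyperplane arrangement, which is why $A+\lambda$ is still an alcove and not just a translated simplex.
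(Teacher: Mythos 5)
Your proof is correct and is essentially the paper's: the paper offers no argument beyond declaring the lemma immediate from the definitions, and your unpacking (coweight translations permute the hyperplanes $H_{\alpha,k}$, so $A+\lambda$ is an alcove with $m_\alpha(A+\lambda)=m_\alpha+(\lambda,\alpha)$, then compare the defining inequalities of $P$ and $P_{(A)}$) is exactly the intended one. The only point worth phrasing more carefully is the ``only if'' direction of your interval comparison: the closed alcove need not touch both hyperplanes $H_{\alpha,\,m_\alpha+(\lambda,\alpha)}$ and $H_{\alpha,\,m_\alpha+(\lambda,\alpha)+1}$ (its $\alpha$-range can be a proper subinterval), so containment of $\overline{A+\lambda}$ in $P$ yields $k_\alpha\leq m_\alpha+(\lambda,\alpha)$ and $m_\alpha+(\lambda,\alpha)+1\leq K_\alpha$ only after observing that $k_\alpha$, $K_\alpha$ and $(\lambda,\alpha)$ are integers, e.g.\ by evaluating at a single interior point of $A+\lambda$ and rounding.
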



The lemma says that lattice points in $P_{(A)}$ are in one-to-one
correspondence with alcoves in $P$ that are obtained by affine
translations of $A$.

For $w\in W$, the definition of the polytope
$P_{(w)}=P_{(w(A_\circ))}$ can be rewritten as
$$
P_{(w)} = \{\lambda\in V\mid
k_\alpha+d_\alpha(w^{-1})\leq(\lambda,\alpha)\leq
K_\alpha+d_\alpha(w^{-1})-1,\textrm{ for all }\alpha\in\Phi^+\}.
$$
We have used Lemma~\ref{lem:inversionalcove}.

Notice that $P_{(A+\lambda)} = P_{(A)}-\lambda$. Thus the polytopes
$P_{(A)}\equiv P_{(B)}$ are equivalent modulo affine translations by
elements of $\coweight$, whenever $A\equiv B\ \mathrm{mod}\
\coweight$. This implies that the polytope $P_{(\bar w)}=P_{(w)}$ is
correctly defined modulo affine translations by coweights
$\lambda\in \Lambda$, where $w\in W$ is any representative of a
coset $\bar w\in W/C$.

Let $I(P)=|P\cap \Lambda|$ be the number of lattice points in $P$.
The following statement establishes a relation between the volume of
an alcoved polytope and the numbers of lattice points in smaller
alcoved polytopes.

\begin{theorem}  Let $P$ be an alcoved polytope.
Then
$$
\Vol(P) = \sum_{\bar w\in W/C} I(P_{(\bar w)}).
$$
\label{th:vol=I}
\end{theorem}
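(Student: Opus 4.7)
The plan is to rewrite $\Vol(P)$ as a count of alcoves inside $P$ and then sort those alcoves by their orbit under coweight translation. Since every alcove has unit volume by Lemma~\ref{lem:simply-transitive}, $\Vol(P)$ equals the number of closed alcoves contained in $P$. Two such alcoves lie in the same $\coweight$-translation orbit precisely when they are $\sim$-equivalent, and by the identifications $W/C \simeq \Waff/\Caff \simeq \A/\coweight$ from Section~\ref{sec:groupC} these orbits are canonically indexed by $W/C$. This gives
\[
\Vol(P) = \sum_{\bar w \in W/C} \#\set{A \in \A \mid A \subset P \text{ and } A \sim w(A_\circ)},
\]
where $w \in W$ is any representative of the coset $\bar w$.

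Next, I would invoke Lemma~\ref{lem:A+lambda} on each summand. With the representative alcove $A = w(A_\circ)$ fixed, the alcoves of $P$ equivalent to $A$ are exactly the translates $A+\lambda$ with $\lambda \in \coweight$, and by the lemma these are in bijection with $P_{(A)} \cap \coweight$. So the $\bar w$ summand equals $I(P_{(w)})$, and summing over $\bar w \in W/C$ yields the claimed identity.

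The only subtlety is that $P_{(w)}$ itself depends on the chosen representative $w$, whereas its lattice-point count does not. This is immediate from the identity $P_{(A+\lambda)} = P_{(A)} - \lambda$ recorded just before the theorem statement: changing the representative within the coset $\bar w$ replaces $w(A_\circ)$ by $w(A_\circ)+\mu$ for some $\mu \in \coweight$, which merely translates $P_{(w)}$ by $-\mu$ and leaves $I(P_{(w)}) = \#(\coweight \cap P_{(w)})$ unchanged. I expect no serious obstacle; the substantive content of the theorem has been packaged into Lemma~\ref{lem:A+lambda} and the identification $\A/\coweight \simeq W/C$, and this proof is essentially their assembly.
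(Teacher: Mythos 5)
Your proof is correct and follows the same route as the paper: both count the alcoves of $P$ grouped by their class modulo $\coweight$-translation, apply Lemma~\ref{lem:A+lambda} to each class, and use the identification $W/C \simeq \A/\coweight$ from Section~\ref{sec:groupC}. Your write-up merely makes explicit the representative-independence via $P_{(A+\lambda)} = P_{(A)} - \lambda$, which the paper records just before the theorem.
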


\begin{proof}
According to Lemma~\ref{lem:A+lambda}, the total number of alcoves
in $P$ equals the sum of $I(P_{(A)})$ over representatives $A$ of
cosets $\A/\coroot$. This is exactly the claim of the theorem.
\end{proof}

\section{Generalized hypersimplices}

For $k=1,\dots,h^\vee-1$, let us define the $k$-th {\it generalized
hypersimplex\/} $\Delta_k^\Phi$ as the alcove polytope given by
$$
\Delta_k^\Phi=\{\lambda\in V\mid 0\leq (\lambda,\alpha_i)\leq 1,
\textrm{ for }i=1,\dots,r; \textrm{ and } k-1\leq
(\lambda,\theta)\leq k\}.
$$
In other words, the generalized hypersimplices are the slices of the
parallelepiped $\Pi$ by the parallel hyperplanes of the form
$H_{\theta,k}$, for $k\in\Z$. Clearly, the first generalized
hypersimplex is the fundamental alcove: $\Delta_1^\Phi=A_\circ$.
Also the last generalized hypersimplex is the alcove given by
$\Delta_{h^\vee-1}^\Phi=w_\circ(A_\circ)+\rho$, where $w_\circ\in W$
is the longest element in $W$.

\begin{lemma}  Let $w\in W$.
The polytope $(\Delta_k^\Phi)_{(w)}$ consists of a single point
$\lambda\in\coweight$, if $\cdes(w^{-1})=k$; and
$(\Delta_k^\Phi)_{(w)}$ is empty, if $\cdes(w^{-1})\ne k$.
\end{lemma}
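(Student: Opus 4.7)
The plan is to observe that the constraints on $(\Delta_k^\Phi)_{(w)}$ coming from the simple roots alone already force the polytope down to a single candidate lattice point, after which a check of the $\theta$-slab decides whether $(\Delta_k^\Phi)_{(w)}$ is that singleton or is empty.

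First I would spell out the defining inequalities of $(\Delta_k^\Phi)_{(w)}$ using the formula for $P_{(w)}$ stated just above the lemma, together with the bounds $k_{\alpha_i}=0$ and $K_{\alpha_i}=1$ built into $\Delta_k^\Phi$. For each simple root $\alpha_i$ the translated lower and upper bounds coincide and the inequality collapses to the single equation $(\lambda,\alpha_i)=d_i(w^{-1})$. Since $\w_1,\dots,\w_r$ is the dual basis to $\alpha_1,\dots,\alpha_r$, this forces $\lambda=\sum_{i=1}^r d_i(w^{-1})\,\w_i=\delta_{w^{-1}}$. Thus $(\Delta_k^\Phi)_{(w)}$ is contained in $\{\delta_{w^{-1}}\}$, regardless of the remaining inequalities from $\theta$ or from the non-simple positive roots; in particular it is either empty or exactly this single lattice point.

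To decide which, I would appeal to Lemma~\ref{lem:A+lambda}: $\delta_{w^{-1}}\in(\Delta_k^\Phi)_{(w)}$ if and only if the translated alcove $w(A_\circ)+\delta_{w^{-1}}$ lies inside $\Delta_k^\Phi$. Applying Lemma~\ref{lem:delta} with $w^{-1}$ in place of $w$ already places this translated alcove inside the parallelepiped $\Pi$, so the only extra condition to verify is the $\theta$-slab inequality $k-1\le(\mu,\theta)\le k$. Combining Lemma~\ref{lem:inversionalcove} (which pins the $\theta$-values of $w(A_\circ)$ to the open strip $-d_\theta(w^{-1})<(\cdot,\theta)<1-d_\theta(w^{-1})$) with $(\delta_{w^{-1}},\theta)=\sum_{i=1}^r a_i\,d_i(w^{-1})$ and the identity $d_\theta(w^{-1})=1-d_0(w^{-1})$ (coming from $\alpha_0=-\theta$), the $\theta$-coordinate of $w(A_\circ)+\delta_{w^{-1}}$ falls in the open interval $(\cdes(w^{-1})-1,\cdes(w^{-1}))$. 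This lies in $[k-1,k]$ exactly when $\cdes(w^{-1})=k$, which completes the argument.

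No serious obstacle appears. The key structural insight is that the simple-root equations already pin the polytope to a point, so there is nothing to optimize; the remaining work is the small piece of book-keeping that converts between the $\alpha_0$-descent statistic $d_0$ appearing inside $\cdes$ and the $\theta$-descent statistic that governs the $\theta$-coordinate of $w(A_\circ)$.
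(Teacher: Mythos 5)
Your argument is correct and takes essentially the same route as the paper: the simple-root inequalities of $(\Delta_k^\Phi)_{(w)}$ collapse to equations that pin $\lambda=\delta_{w^{-1}}$, and the remaining $\theta$-condition is then equivalent to $\cdes(w^{-1})=k$. The only difference is cosmetic --- the paper checks the $\theta$-condition by substituting $\delta_{w^{-1}}$ into the equation $(\lambda,\theta)=k-d_0(w^{-1})$ coming directly from the definition of $P_{(w)}$, while you verify the same thing geometrically via Lemma~\ref{lem:A+lambda} and the translated alcove, using Lemmas~\ref{lem:delta} and~\ref{lem:inversionalcove}; the bookkeeping with $d_\theta(w^{-1})=1-d_0(w^{-1})$ is handled correctly.
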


\begin{proof} By the definition, the polytope $(\Delta_k^\Phi)_{(w)}$
is given by
$$
(\Delta_k^\Phi)_{(w)}=\{\lambda\in \Lambda\mid d_{i}(w^{-1})=
(\lambda,\alpha_i), \textrm{ for }i=1,\dots,r; \
k-d_0(w^{-1})=(\lambda,\theta)\}.
$$
The first $r$ equations $d_i(w^{-1})=(\lambda,\alpha_i)$ have a
single solution $\lambda = \sum_{i\in D}\omega_i = \delta_{w^{-1}}$,
where $D=\{i\mid d_{\alpha_i}(w^{-1})=1\}$. The last equation
$k-d_0(w^{-1})=(\lambda,\theta^\vee)$, for the point
$\lambda=\sum_{i\in D} \omega_i$, says that $\cdes(w)=k$.
\end{proof}

\begin{corollary}  All representatives $w$ of a coset $\bar w\in W/C$
have the same generalized descent numbers $\cdes(w^{-1})$.
\end{corollary}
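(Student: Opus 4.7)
The plan is to read the corollary off of the lemma that was just proved. That lemma shows that for $w \in W$, the polytope $(\Delta_k^\Phi)_{(w)}$ consists of a single lattice point exactly when $\cdes(w^{-1}) = k$, and is empty for all other $k$. Therefore $\cdes(w^{-1})$ can be recovered from $w$ as the unique $k \in \set{1,\ldots,h^\vee-1}$ for which $(\Delta_k^\Phi)_{(w)}$ is nonempty.

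Next I would invoke the observation made just before the lemma: for any alcoved polytope $P$, the polytope $P_{(w)}$ is well defined, up to translation by an element of $\coweight$, on the coset $\bar w \in W/C$. Translations by integral coweights preserve nonemptiness, so whether $P_{(w)}$ is empty depends only on $\bar w$. Specializing to $P = \Delta_k^\Phi$, this means that if $w$ and $w'$ are two representatives of the same coset, then $(\Delta_k^\Phi)_{(w)}$ is nonempty if and only if $(\Delta_k^\Phi)_{(w')}$ is nonempty, and hence $\cdes(w^{-1}) = \cdes((w')^{-1})$.

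As an independent check, one can derive the corollary directly from Theorem~\ref{thm:cdes}: two representatives of $\bar w \in W/C$ differ on the right by some $c \in C$, so their inverses differ on the left by $c^{-1} \in C$, and the theorem (which says $\cdes$ is constant on $C\backslash W/C$) then gives the equality. The main point requiring a moment of care is the switch from $w$ to $w^{-1}$ in passing between cosets $W/C$ and the statements of the earlier results; otherwise no obstacle remains, since all the needed machinery is already in place.
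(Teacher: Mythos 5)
Your main argument is exactly the paper's proof: since $(\Delta_k^\Phi)_{(w)}$ is well defined up to translation by an integral coweight on the coset $\bar w \in W/C$, and the preceding lemma identifies $\cdes(w^{-1})$ as the unique $k$ for which this polytope is nonempty, the corollary follows. Your secondary check via Theorem~\ref{thm:cdes} (writing $w' = wc$, so $(w')^{-1} = c^{-1}w^{-1}$ with $c^{-1}\in C$) is also valid and even more direct, but the paper's route, which you correctly reproduce first, is the intended one.
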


\begin{proof} The polytopes $(\Delta_k^\Phi)_{(u)}\equiv (\Delta_k^\Phi)_{(w)}$
are equivalent modulo affine translations, whenever $\bar u = \bar
w$ in $W/C$.
\end{proof}

Define $\cdes(\bar w^{-1}) = \cdes(w^{-1})$, where $w\in W$ is any
representative of a coset $\bar w$. Theorem~\ref{th:vol=I} implies
the following statement.

\begin{theorem}\label{T:hypersimplex}  The volume $\Vol(\Delta_k^\Phi)$ of $k$-th generalized
hypersimplex $\Delta_k^\Phi$ equals the number of cosets $\bar w\in
W/C$ such that $\cdes(\bar w^{-1})= k$.  Equivalently, $f\cdot
\Vol(\Delta_k^\Phi)$ equals the number of elements $w\in W$ such
that $\cdes(w^{-1})=k$.
\end{theorem}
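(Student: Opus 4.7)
The plan is to apply Theorem~\ref{th:vol=I} directly to $P = \Delta_k^\Phi$. This gives
\[
\Vol(\Delta_k^\Phi) = \sum_{\bar w \in W/C} I((\Delta_k^\Phi)_{(\bar w)}),
\]
so the entire task reduces to counting which coset representatives contribute a nonzero number of lattice points, and in what quantity.

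Next I would invoke the lemma immediately preceding the theorem: for each $w \in W$, the polytope $(\Delta_k^\Phi)_{(w)}$ is either empty (when $\cdes(w^{-1}) \neq k$) or consists of exactly one lattice point, namely $\delta_{w^{-1}}$ (when $\cdes(w^{-1}) = k$). Hence $I((\Delta_k^\Phi)_{(w)}) \in \{0,1\}$, with value $1$ precisely when $\cdes(w^{-1}) = k$. Moreover, by the corollary preceding the theorem, $\cdes(w^{-1})$ depends only on the coset $\bar w \in W/C$, so $I((\Delta_k^\Phi)_{(\bar w)})$ is well defined and equals the indicator $[\cdes(\bar w^{-1}) = k]$. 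Summing over cosets yields the first assertion:
\[
\Vol(\Delta_k^\Phi) = \#\{\bar w \in W/C \mid \cdes(\bar w^{-1}) = k\}.
\]

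For the equivalent statement, I would pass from cosets to elements of $W$. Since $\cdes$ is constant on cosets of $C$ (acting on the right, via the same corollary), each coset $\bar w$ with $\cdes(\bar w^{-1}) = k$ corresponds to exactly $|C| = f$ elements $w \in W$ with $\cdes(w^{-1}) = k$, where $|C| = f$ was established in Section~\ref{sec:groupC}. Multiplying through by $f$ gives $f\cdot \Vol(\Delta_k^\Phi) = \#\{w \in W \mid \cdes(w^{-1}) = k\}$.

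There is no real obstacle here: both the volume/lattice-point identity and the single-point-or-empty description of $(\Delta_k^\Phi)_{(w)}$ have already been proved, so the theorem is essentially a bookkeeping consequence. The only subtlety worth double-checking in the writeup is that $\cdes(\bar w^{-1})$ (as opposed to $\cdes(\bar w)$) is what naturally appears, which is why the lemma is stated for $w^{-1}$; this matches the appearance of $d_\alpha(w^{-1})$ in the defining inequalities of $P_{(w)}$.
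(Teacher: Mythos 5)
Your proposal is correct and follows exactly the route the paper intends: apply Theorem~\ref{th:vol=I} to $P=\Delta_k^\Phi$, use the preceding lemma that $(\Delta_k^\Phi)_{(w)}$ is a single lattice point or empty according to whether $\cdes(w^{-1})=k$, and use the corollary plus $|C|=f$ to pass from cosets to elements. The paper simply states that Theorem~\ref{th:vol=I} implies the result, and your writeup is that same deduction made explicit.
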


Let $\H(b_1,\dots,b_r;k,K)$ be the \emph{thick hypersimplex} given
by
$$
\{\lambda\in V\mid 0\leq (\lambda,\alpha_i)\leq b_i, \textrm{ for
}i=1,\dots,r; \textrm{ and } k\leq (\lambda,\theta)\leq K\}.
$$

\begin{proposition}
We have
$$\Vol(\H(b_1,\dots,b_r;k,K) =
\sum_l \Vol(\Delta_l^\Phi)\cdot I(\H(b_1-1,\dots,b_r-1;l-K+1,l-k)).
$$
\end{proposition}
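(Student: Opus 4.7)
The plan is to apply Theorem~\ref{th:vol=I} to $\H(b_1,\ldots,b_r;k,K)$, express each summand $I(\H(b;k,K)_{(\bar w)})$ as the number of lattice points in a thinner thick hypersimplex depending only on $l = \cdes(\bar w^{-1})$, and then invoke Theorem~\ref{T:hypersimplex} to recognize the resulting coefficient as $\Vol(\Delta_l^\Phi)$.

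By Theorem~\ref{th:vol=I}, we have $\Vol(\H(b;k,K)) = \sum_{\bar w \in W/C} I(\H(b;k,K)_{(\bar w)})$. For a representative $w$, the explicit description of $P_{(w)}$ applied to the defining inequalities of the thick hypersimplex gives
\[
d_i(w^{-1}) \leq (\lambda,\alpha_i) \leq b_i - 1 + d_i(w^{-1}), \quad k + d_\theta(w^{-1}) \leq (\lambda,\theta) \leq K - 1 + d_\theta(w^{-1}),
\]
for $i = 1,\ldots,r$; the other positive-root constraints are slack for $\H(b;k,K)$ and impose nothing tight. I would then perform the lattice translation $\mu = \lambda - \delta_{w^{-1}}$, where $\delta_{w^{-1}} = \sum_{i=1}^{r} d_i(w^{-1})\,\omega_i \in \coweight$ is the integral coweight from Lemma~\ref{lem:delta}. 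On the simple-root inequalities this directly produces $0 \leq (\mu,\alpha_i) \leq b_i - 1$. For the $\theta$-inequality, the crucial identity is
\[
(\delta_{w^{-1}}, \theta) = \sum_{i=1}^{r} a_i\, d_i(w^{-1}) = \cdes(w^{-1}) - d_0(w^{-1}),
\]
which follows from expanding $\theta = a_1\alpha_1 + \cdots + a_r\alpha_r$ and the definition of $\cdes$. Combined with the fact that $d_\theta(w^{-1}) + d_0(w^{-1}) = 1$ (since $\alpha_0 = -\theta$), the $\theta$-constraint on $\mu$ reduces to a bounded interval depending only on $l = \cdes(w^{-1})$, $k$, and $K$. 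After possibly composing with a sign-reversing affine symmetry (for example $\mu \mapsto -\mu$ followed by a suitable lattice shift, or conjugation by the longest element $w_\circ$) to align the $\theta$-interval with the one in the statement, this produces $I(\H(b;k,K)_{(\bar w)}) = I(\H(b-1; l-K+1, l-k))$.

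Finally, I would group the sum by $l = \cdes(\bar w^{-1})$ and apply Theorem~\ref{T:hypersimplex} to identify the number of cosets with given $l$ as $\Vol(\Delta_l^\Phi)$, producing the claimed formula. The main obstacle is the middle step: tracking carefully the signs and indices while unwinding the interplay between $d_0(w^{-1})$ and $d_\theta(w^{-1})$, evaluating $(\delta_{w^{-1}}, \theta)$ in terms of $\cdes$, and matching the $\theta$-interval produced by the substitution with the one appearing in the proposition.
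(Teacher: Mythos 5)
Your route is the same as the paper's: both reduce to Theorem~\ref{th:vol=I}, identify each $\H(b_1,\dots,b_r;k,K)_{(\bar w)}$ up to a coweight translation with a thick hypersimplex depending only on $l=\cdes(\bar w^{-1})$, and then collect cosets according to $l$ via Theorem~\ref{T:hypersimplex}; the paper phrases the middle step by testing when $\lambda+\mu\in\H(b_1,\dots,b_r;k,K)$ for a generic point $\lambda$ of an alcove of $\Delta_l^\Phi$, which is your $\delta_{w^{-1}}$-translation in different clothing. Your bookkeeping is correct as far as you carried it: with $(\delta_{w^{-1}},\theta)=\cdes(w^{-1})-d_0(w^{-1})$ and $\inv_\theta(w^{-1})=1-d_0(w^{-1})$, the substitution $\mu=\lambda-\delta_{w^{-1}}$ turns the constraints into $0\le(\mu,\alpha_i)\le b_i-1$ and $k-l+1\le(\mu,\theta)\le K-l$, that is,
\[
I\bigl(\H(b_1,\dots,b_r;k,K)_{(\bar w)}\bigr)=I\bigl(\H(b_1-1,\dots,b_r-1;\,k-l+1,\,K-l)\bigr),\qquad l=\cdes(\bar w^{-1}).
\]

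The genuine gap is your final step, where an unspecified ``sign-reversing affine symmetry'' is invoked to replace the window $[k-l+1,K-l]$ by $[l-K+1,l-k]$. No such symmetry exists in general: a coweight translation preserving all the constraints $0\le(\mu,\alpha_i)\le b_i-1$ must be trivial; the point reflection $\mu\mapsto\sum_i(b_i-1)\omega_i-\mu$ preserves the box but sends the window to $\bigl[\sum_i a_i(b_i-1)-(K-l),\ \sum_i a_i(b_i-1)-(k-l+1)\bigr]$, which is the desired window only when $\sum_i a_i(b_i-1)=1$; and $\mu\mapsto -w_\circ(\mu)$ fixes $(\mu,\theta)$ and merely permutes the $b_i$. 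The two windows themselves coincide only when $k+K=2l$. In fact the identity with $[l-K+1,l-k]$ fails as printed: in type $A_2$ take $b_1=b_2=2$, $k=0$, $K=4$; the left-hand side is $8$, while $\Vol(\Delta_1^\Phi)\,I(\H(1,1;-2,1))+\Vol(\Delta_2^\Phi)\,I(\H(1,1;-1,2))=3+4=7$ (the windows $[k-l+1,K-l]$ give $4+4=8$). So the step you postponed cannot be carried out; what your computation actually proves is the version of the proposition with $I(\H(b_1-1,\dots,b_r-1;\,k-l+1,\,K-l))$ on the right. Note that the paper's own proof asserts the same reflected window $[l-K+1,l-k]$ at the analogous point, so this looks like a sign slip in the source rather than a missing idea on your part; your argument, carried through without the final ``symmetry'' patch, is the correct proof of the evidently intended statement.
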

\begin{proof}
Let $\ll \in \coweight$ be in the interior of $\Delta_l^\Phi$ and
$\mu \in \coweight$.  Then $0 < (\ll,\alpha_i) < 1$ for $i \in
[1,r]$ and $l-1 < (\ll,\theta) \leq l$. Thus $\ll+ \mu \in
\H(b_1,\dots,b_r;k,K)$ if and only if we have $0 \leq (\mu,\alpha_i)
\leq b_i -1$ and $l - K + 1 \leq (\mu,\theta) \leq l-k$.  We
conclude that for any alcove $A \in \Delta_l^\Phi$, we have
$\H(b_1,\dots,b_r;k,K)_{(A)}$ = $\H(b_1-1,\dots,b_r-1;l-K+1,l-k) \
\mod \ \coweight$.  As $l$ varies, we obtain a translate of
$\H(b_1,\dots,b_r;k,K)_{(\bar w)}$ for each coset $\bar{w}$ exactly
once in this form.
\end{proof}

\section{A $q$-analogue of Weyl's formula}
Recall that for a permutation $w=w_1\dots w_n$ in the symmetric
group $S_n$, a {\it descent\/} is an index $i\in\{1,\dots,n-1\}$
such that $w_i>w_{i+1}$.  Let $\des(w)$ be the number of descents of
$w\in S_n$. The $n$-th {\it Eulerian polynomial\/} $A_n(q)$ is
defined as
$$
A_n(q) = \sum_{w\in S_n} q^{\des(w)+1},
$$
for $n\geq 1$, and $A_0(q) = 1$.  These polynomials can also be
expressed as $A_n(q) = (1-q)^{n+1}\cdot \sum_{k\geq 0} k^n \,q^k$.
Let $[n]_q = (1-q^n)/(1-q)$ denote the $q$-analogue of an integer
$n\in\Z$.

The group algebra $\Z[\Lambda^\vee/L^\vee]$ has a $\Z$-basis of formal exponents
$e^{x}$, for $x\in \Lambda^\vee/L^\vee$, with multiplication $e^x\cdot e^y =
e^{x+y}$. Let $\Z[q][\Lambda^\vee/L^\vee]=\Z[q]\otimes\Z[\Lambda^\vee/L^\vee]$.  The
following theorem generalizes Weyl's formula (\ref{eq:Weylsformula})
for the order of the Weyl group.

\begin{theorem}\label{T:qWeyl}  The following identity holds in the group algebra
$\Z[q][\Lambda^\vee/L^\vee]$:
$$
\sum_{w\in W} q^{\cdes(w)}\,e^{\cmaj(w)} = \left(\sum_{x \in
\Lambda^\vee/L^\vee} e^{x}\right)\cdot A_{r}(q)\cdot [a_1]_q\cdots [a_r]_q.
$$
In particular, we have the following identity for polynomials in
$\Z[q]$:
$$
\sum_{w\in W} q^{\cdes(w)} = f\cdot A_r(q)\cdot [a_1]_q\cdots
[a_r]_q.
$$
\end{theorem}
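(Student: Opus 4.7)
The plan is to compute the sum geometrically via the assignment $w \mapsto A_w := w^{-1}(A_\circ) + \delta_w$ of Lemma~\ref{lem:delta}, then reduce to a combinatorial identity for slice volumes of $\Pi$.

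First I verify that $A_w \in \Delta_{\cdes(w)}^\Phi$: for the central point $\lambda = w^{-1}(\rho/h^\vee) + \delta_w$ of $A_w$, Lemma~\ref{lem:inversionalcove} gives $-d_j(w) < (w^{-1}(\rho/h^\vee),\alpha_j) < 1 - d_j(w)$ at each simple root and $d_0(w)-1 < (w^{-1}(\rho/h^\vee),\theta) < d_0(w)$ at $\alpha_0 = -\theta$; combining with $\delta_w = \sum_{j\geq 1} d_j(w)\,\omega_j$ gives $(\lambda,\alpha_j)\in(0,1)$ and $(\lambda,\theta)\in(\cdes(w)-1,\cdes(w))$. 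Next I show that $\phi: W \to \{\textrm{alcoves of }\Pi\} \times C$, $w \mapsto (A_w,\cmaj(w))$, is a bijection. If $A_{w_1} = A_{w_2}$, then $w_2^{-1}(A_\circ)$ and $w_1^{-1}(A_\circ)$ differ by a $\Lambda^\vee$-translation; applying $w_1$ and invoking Lemma~\ref{lem:Weylaction}, $w_1 w_2^{-1}\in W$ sends $A_\circ$ to a $\Lambda^\vee$-translate of itself, so $w_1 w_2^{-1} \in W \cap \Caff = C$ and $w_1 = c\,w_2$ for some $c \in C$. Matching $\cmaj$-values with $\cmaj(c\,w_2) = c\cdot\cmaj(w_2)$ from Theorem~\ref{thm:cmajcdes} forces $c = \id$; combined with $|W| = |C|\cdot|\{\textrm{alcoves in }\Pi\}|$, this gives bijectivity.

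Via $\phi$, the sum factors:
\begin{equation*}
\sum_{w \in W} q^{\cdes(w)}\,e^{\cmaj(w)} = \left(\sum_{c\in C} e^c\right)\sum_{k=1}^{h^\vee-1} \Vol(\Delta_k^\Phi)\, q^k.
\end{equation*}
The polynomial identity follows by sending every $e^x \mapsto 1$, collapsing the first factor to $|C| = f$.

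It remains to prove $\sum_k \Vol(\Delta_k^\Phi)\,q^k = A_r(q)\prod_{i=1}^r [a_i]_q$. Write $\Pi = [0,1]^r$ in $\omega_i$-coordinates $x_i$ and decompose it into axis-aligned sub-boxes $S_{\vec j} = \prod_i [j_i/a_i,(j_i+1)/a_i]$ for $\vec j \in \prod_i \{0,\ldots,a_i-1\}$. Under $y_i = a_i x_i - j_i$, each $S_{\vec j}$ maps onto the unit cube and the height function $(\lambda,\theta) = \sum_i a_i x_i$ becomes $\sum j_i + \sum y_i$. Laplace's classical formula identifies the Euclidean volume of $\{y\in[0,1]^r : m-1\leq \sum y_i\leq m\}$ with $A(r,m-1)/r!$ where $A_r(q) = \sum_m A(r,m-1)\,q^m$; after the Jacobian $\prod 1/a_i$ and the normalization $r!\prod a_i$ converting Euclidean to $A_\circ$-normalized volume, this gives $\Vol(\Delta_k^\Phi\cap S_{\vec j}) = A(r,k-\sum j_i-1)$. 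Summing over $\vec j$ and reindexing by $\ell = k-\sum j_i$ yields the factorization. The main obstacle is precisely this last step: the sub-boxes $S_{\vec j}$ do not respect the alcove decomposition of $\Pi$ in general, so the argument must be carried out purely at the level of volumes. This is harmless because volume is additive under any polyhedral subdivision, and all type-dependence gets absorbed into the single classical unit-cube slice formula.
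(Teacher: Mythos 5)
Your proposal is correct, but the reduction step is organized differently from the paper's. The paper first uses Theorems~\ref{thm:cdes} and~\ref{thm:cmajcdes} to choose the coset representatives $W'=\{w\mid \cmaj(w)=\id\}$ and pull out the factor $\sum_{x}e^x$, and then invokes Theorem~\ref{T:hypersimplex} (itself resting on Theorem~\ref{th:vol=I} and the lattice-point description of $(\Delta_k^\Phi)_{(w)}$) to convert $\sum_{w\in W'}q^{\cdes(w)}$ into $\sum_k \Vol(\Delta_k^\Phi)q^k$. You instead build the explicit bijection $w\mapsto (w^{-1}(A_\circ)+\delta_w,\ \cmaj(w))$ from $W$ to $\{\text{alcoves in }\Pi\}\times C$, checking via Lemma~\ref{lem:inversionalcove} that the alcove $w^{-1}(A_\circ)+\delta_w$ lies in the slice $\Delta_{\cdes(w)}^\Phi$, and using $\cmaj(cw)=c\,\cmaj(w)$ (the left-multiplication half of Theorem~\ref{thm:cmajcdes}) together with the count $|W|=f\cdot\Vol(\Pi)$ for bijectivity; this simultaneously reproves the part of Theorem~\ref{T:hypersimplex} you need (and the left-coset half of Theorem~\ref{thm:cdes}), making the reduction self-contained and avoiding the $w\mapsto w^{-1}$ twist between left and right cosets that the paper has to track. (A small remark: in the injectivity step you do not really need Lemma~\ref{lem:Weylaction}, only that $W$ preserves $\coweight$.) The second half of your argument — decomposing $\Pi$ into the $a_1\cdots a_r$ translates of the box spanned by $\omega_1/a_1,\ldots,\omega_r/a_r$, rescaling to the unit cube, and quoting the classical Eulerian-number formula for cube slices — is exactly the proof of Proposition~\ref{P:Pi}, just written in $\omega_i$-coordinates with the Jacobian and normalization made explicit; your observation that the sub-boxes need not respect the alcove decomposition, and that this is immaterial because one only needs additivity of volume, is a fair point that the paper's phrasing glosses over. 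Net effect: same overall strategy (factor out $\sum_x e^x$ and compute $g_\Pi(q)$), with a more hands-on, bijective treatment of the first factorization that buys independence from Theorems~\ref{th:vol=I} and~\ref{T:hypersimplex}, at the cost of redoing some of their content.
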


We first establish the following generating function for the volumes of generalized hypersimplices.

\begin{prop}\label{P:Pi}
The generating function for the volumes of generalized hypersimplices is given by
$$
\sum_{k =1}^{h^\vee-1} \Vol(\Delta_k^\Phi) q^k = A_{r}(q)\cdot [a_1]_q\cdots [a_r]_q.
$$
\end{prop}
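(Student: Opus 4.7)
The plan is to compute $\Vol(\Delta_k^\Phi)$ by an explicit change of variables reducing the problem to a classical Eulerian volume identity for slices of the unit cube, and then recognize the generating function on the right-hand side as a product of geometric sums.

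First I would parametrize $\Pi$ by the coordinates $(x_1,\dots,x_r) \in [0,1]^r$ via $\lambda = x_1\omega_1 + \cdots + x_r\omega_r$.  Since $(\omega_i,\theta) = a_i$, the linear functional $(\lambda,\theta)$ becomes $a_1 x_1 + \cdots + a_r x_r$, so
\[
\Delta_k^\Phi \ \longleftrightarrow\ \bigl\{ x \in [0,1]^r \ \bigm|\ k-1 \le a_1 x_1 + \cdots + a_r x_r \le k\bigr\}.
\]
The normalization $\Vol(A_\circ)=1$ gives $\Vol(\Pi) = r!\,a_1\cdots a_r$, and therefore in these coordinates $\Vol$ equals $r!\,a_1\cdots a_r$ times Lebesgue measure.

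Next I would substitute $u_i = a_i x_i$ (so $u_i \in [0,a_i]$), absorbing the factor $a_1 \cdots a_r$ into the Jacobian, to obtain
\[
\Vol(\Delta_k^\Phi) \;=\; r!\cdot \mathrm{Leb}\bigl\{u \in [0,a_1]\times\cdots\times[0,a_r] \ \bigm|\ k-1 \le u_1+\cdots+u_r \le k\bigr\}.
\]
Writing $u_i = b_i + z_i$ with $b_i \in \{0,1,\dots,a_i-1\}$ and $z_i \in [0,1]$ decomposes the rectangle $\prod[0,a_i]$ into unit cubes indexed by tuples $b = (b_1,\dots,b_r)$.  On each cube we get
\[
\Vol(\Delta_k^\Phi) \;=\; \sum_{b} r!\cdot \mathrm{Leb}\bigl\{z \in [0,1]^r \ \bigm|\ k - |b| - 1 \le z_1+\cdots+z_r \le k - |b|\bigr\},
\]
where $|b| = b_1+\cdots+b_r$.

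Now I would invoke the classical Laplace--Euler formula for slices of the unit cube:
\[
r!\cdot \mathrm{Leb}\bigl\{z \in [0,1]^r \ \bigm|\ \ell-1 \le z_1+\cdots+z_r \le \ell\bigr\} \;=\; \#\bigl\{w \in S_r \ \bigm|\ \des(w) = \ell-1\bigr\},
\]
the right-hand side being the coefficient $[q^\ell]A_r(q)$ by the paper's convention $A_r(q) = \sum_{w \in S_r} q^{\des(w)+1}$.  Multiplying by $q^k$ and summing in $k$, while shifting the exponent by $|b|$, yields
\[
\sum_{k=1}^{h^\vee-1} \Vol(\Delta_k^\Phi)\,q^k \;=\; \sum_{b} q^{|b|}\cdot A_r(q) \;=\; A_r(q)\cdot \prod_{i=1}^{r}\bigl(1 + q + \cdots + q^{a_i-1}\bigr) \;=\; A_r(q)\cdot [a_1]_q\cdots [a_r]_q,
\]
which is the desired identity.

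The only real subtlety is bookkeeping: tracking the volume normalization correctly, and matching the Eulerian-number convention with the paper's $A_r(q) = \sum_w q^{\des(w)+1}$.  The classical cube-slicing identity itself is a standard consequence of the triangulation of $[0,1]^r$ into the simplices $\{z_{w(1)} \le \cdots \le z_{w(r)}\}$, so I would either cite it or include a one-sentence reminder of its derivation.
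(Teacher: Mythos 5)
Your proof is correct, and it is essentially the paper's argument written in explicit coordinates: the paper decomposes $\Pi$ into translates of the small parallelepiped $\Xi$ spanned by $\omega_1/a_1,\dots,\omega_r/a_r$ (your unit cubes after the substitution $u_i = a_i x_i$), extracts the factors $[a_i]_q$ from the integer shifts in the $\theta$-direction, and identifies the slice generating function of $\Xi$ with $A_r(q)$ via the classical cube-slicing identity. The normalization and convention bookkeeping in your write-up checks out.
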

\begin{proof}
The union of the generalized hypersimplices $\Delta_k^\Phi$ for $k = 1,2,\ldots,h^\vee -1$ is the fundamental parallelpiped $\Pi$.  For a bounded polytope $P \subset V$, define the generating function
$$
g_P(q) = \sum_k \Vol(P \cap \{\lambda \in V \mid k-1 \leq (\lambda,\theta) \leq k\}) \, q^k \in \R[q^{\pm 1}].
$$
Then $g_\Pi(q) = \sum_{k =1}^{h^\vee-1} \Vol(\Delta_k^\Phi) q^k$.  We note that if $(\lambda,\theta)= a \in \Z$, then $g_{P+\lambda}(q) = q^a \, g_P(q)$.  Now set $\Xi$ to be the parallelpiped spanned by the vectors $\omega_1/a_1,\ldots,\omega_r/a_r$.  Then $\Pi$ is a union of translates of $\Xi$ by integral linear combinations of the vectors $\omega_i/a_i$, and we deduce that
$$
g_\Pi(q) = g_\Xi(q) \cdot [a_1]_q\cdots [a_r]_q.
$$
Since we are normalizing the fundamental alcove $A_\circ$ with vertices $\omega_1/a_1,\ldots,\omega_r/a_r$ to have $\Vol(A_\circ) = 1$, it follows that $g_\Xi(q)$ is equal to the generating function of the normalized volumes of the usual hypersimplices:
$$
g_\Xi(q) = \sum_{k=1}^{r} \Vol([0,1]^r \cap \{(x_1,\ldots,x_r) \in \R^r \mid k-1 \leq x_1 + \cdots + x_r \leq k\}) \, q^k
$$
which is well known to equal to the Eulerian polynomial $A_r(q)$.  This also follows from Theorem \ref{T:hypersimplex} (see Section \ref{sec:typeA}) and is studied in detail in \cite{API}.
\end{proof}

\begin{proof}[Proof of Theorem \ref{T:qWeyl}]
Using Theorem \ref{thm:cmajcdes}, we let $W' = \{w \in W \mid \cmaj(w) = \id\}$ be a set of left coset representatives for $C\backslash W$.  Then $(W')^{-1}$ is a set of right coset representatives for $W/C$.  We calculate
\begin{align*}
&\sum_{w\in W} q^{\cdes(w)}\,e^{\cmaj(w)} \\
& = \sum_{w \in W'} q^{\cdes(w)} \, \sum_{c \in C} e^{\cmaj(c)} &\mbox{by Theorems \ref{thm:cdes} and \ref{thm:cmajcdes} } \\
&= \sum_{w \in (W')^{-1}}\, q^{\cdes(w^{-1})} \cdot \left(\sum_{x \in
\Lambda^\vee/L^\vee} e^{x}\right) \\
& = \left(\sum_{k =1}^{h^\vee-1} \Vol(\Delta_k^\Phi) \, q^k\right)\cdot \left(\sum_{x \in
\Lambda^\vee/L^\vee} e^{x}\right)&\mbox{by Theorem \ref{T:hypersimplex} }\\
&=\left(\sum_{x \in
\Lambda^\vee/L^\vee} e^{x}\right)\cdot A_{r}(q)\cdot [a_1]_q\cdots [a_r]_q & \mbox{by Proposition \ref{P:Pi}}
\end{align*}
\end{proof}

\begin{remark}
We have
$$
\sum_{w\in W} q^{\cdes(w)}\,e^{\cmaj(w)} = \sum_{w\in W} q^{\cdes(w)}\,e^{\cmaj(w^{-1})}.
$$
This follows from the fact (Theorem \ref{thm:cdes}) that $\cdes$ is constant on $C \backslash W /C$ double cosets. Each double coset is a disjoint union of left (resp. right) cosets $C \backslash W$ (resp. $W/C$) for which $e^\cmaj(w)$ (resp. $e^\cmaj(w^{-1})$) takes the values $\left(\sum_{x \in
\Lambda^\vee/L^\vee} e^{x}\right)$.
\end{remark}

\begin{remark}
It would be interesting to compare Theorem \ref{T:qWeyl} with Stembridge and Waugh's Weyl group identity \cite{SW}.
\end{remark}

The following question seems interesting.
\begin{question}
What is $\sum_{w\in W} x^{\cmaj(w)} y^{\cmaj(w^{-1})} $ in
$\Z[\Lambda^\vee/L^\vee]\otimes \Z[\Lambda^\vee/L^\vee]$?
\end{question}

\section{Example: type $A$}
\label{sec:typeA} Let $\Phi = A_{n-1} \subset \R^n/\R(1,1,\cdots,1)$
throughout this section.  The simple roots are $\alpha_i = e_i -
e_{i+1}$ where $e_i$ are the coordinate basis vectors of $\R^n$. The
longest root is $\theta = e_1-e_n$ and we have $a_i = 1$ for $i \in
[0,n]$. The Weyl group $W = S_n$ is the symmetric group on $n$
letters and $\cdes(w)$ is equal to the usual number of descents of
$w$ plus a descent at $n$ if $w_n
> w_1$.  This is the reason for calling $\cdes$ the circular
descent number.  The group $C = \left<c = (123\cdots (n-1)n)\right>$
is generated by the long cycle. The fundamental coweights are given
by $\w_i = e_1 + e_2 + \cdots + e_i$ and one can check that
$\delta_{c^i} = \w_i$.  Thus $\cmaj(w) = c^{-\maj(w) \mod n}$ where
$\maj(w)$ denotes the usual major index of $w$.  We can verify
Proposition~\ref{thm:cmajcdes} directly: left multiplication by the
long cycle $c$ maps $w_1 w_2 \cdots w_n$ to
$(w_1+1)(w_2+1)\cdots(w_n+1)$ where $`n+1'$ is identified with
$`1'$.  Right multiplication by $c$ maps $w_1 w_2 \cdots w_n$ to
$w_2w_3\cdots w_n w_1$.  

The following theorem (\cite{KW} and
\cite[Ex. 7.88]{EC2}) suggests that the map $\cmaj$ may have an
explicit representation theoretic interpretation.  Let $\chi^\lambda$ denote the irreducible character of the symmetric group $S_n$ labeled by a partition $\lambda$.

\begin{theorem}
\label{thm:inducedcycliccharacter} Let $C_n \subset S_n$ be a cyclic
subgroup of order $n$.  Let $\rho = \mathrm{ind}_{C_n}^{S_n} e^{2\pi
\sqrt{-1} j/n}$ be an induced character of $S_n$.  Then we have
 \[ \left<\rho,\chi^\lambda\right>
= \#\{{\rm SYT}(T)\mid \sh(T)=\lambda \text{ and } \, \maj(T) \equiv j \
\mod n\}. \] Here a descent of a standard young tableau (SYT) $T$ is an index $i$ such that the box
containing $i+1$ is to the southwest of the box containing $i$ in
$T$.  The index $\maj(T)$ is defined to be the sum of all the
descents of $T$.
\end{theorem}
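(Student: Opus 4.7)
The result is classical, appearing in \cite{KW} and \cite[Ex.~7.88]{EC2}. The plan is to use discrete Fourier analysis on the cyclic subgroup $C_n$ to reduce both sides of the identity to evaluations at roots of unity, and then to show that the character values of $\chi^\lambda$ and the major-index generating function agree at these evaluations.

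Let $c \in S_n$ be a generator of $C_n$ and set $\zeta = e^{2\pi\sqrt{-1}/n}$. First, Frobenius reciprocity gives
\[
\langle \rho,\chi^\lambda\rangle_{S_n} \;=\; \langle \zeta^j,\mathrm{res}_{C_n}^{S_n}\chi^\lambda\rangle_{C_n} \;=\; \frac{1}{n}\sum_{k=0}^{n-1}\zeta^{-jk}\,\chi^\lambda(c^k).
\]
On the other hand, Stanley's $q$-analogue of the hook length formula gives
\[
F^\lambda(q) \;:=\; \sum_{T:\,\sh(T)=\lambda} q^{\maj(T)} \;=\; \frac{[n]_q!}{\prod_{u\in\lambda}[h(u)]_q},
\]
and applying orthogonality of the characters of $C_n$ to extract the residue $\maj \equiv j \pmod n$ produces
\[
\#\{T \mid \sh(T)=\lambda,\ \maj(T)\equiv j \bmod n\} \;=\; \frac{1}{n}\sum_{k=0}^{n-1}\zeta^{-jk}\,F^\lambda(\zeta^k).
\]
Comparing coefficients of $\zeta^{-jk}$ in these two expressions, it suffices to establish the pointwise identity
\[
\chi^\lambda(c^k) \;=\; F^\lambda(\zeta^k) \qquad \text{for every } k\in\{0,1,\ldots,n-1\}.
\]

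To prove this identity, set $d = n/\gcd(n,k)$, so $\zeta^k$ is a primitive $d$-th root of unity and $c^k$ factors as a product of $\gcd(n,k)$ disjoint $d$-cycles. I would evaluate the left-hand side by the Murnaghan--Nakayama rule, which expresses $\chi^\lambda(c^k)$ as a signed sum over successive $d$-rim-hook removals from $\lambda$; by James's theorem the total vanishes unless the $d$-core of $\lambda$ is empty, in which case it collapses to a signed multinomial coefficient built from the dimensions $f^{\lambda^{(i)}}$ of the pieces of the $d$-quotient $(\lambda^{(0)},\ldots,\lambda^{(d-1)})$. For the right-hand side, I would evaluate $[n]_q!/\prod[h(u)]_q$ at $q=\zeta^k$ by matching hook cells of length divisible by $d$ with cells of the $d$-quotient via the standard abacus bijection, cancelling all zeros and poles; the surviving factors reassemble into exactly the same multinomial expression.

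The main obstacle is the sign bookkeeping in this last step: one must check that the sign contributed by the Murnaghan--Nakayama rule (a product of $(-1)^{\mathrm{leg}}$ over successive rim hooks) matches the sign arising in the $q$-hook formula at $q=\zeta^k$. This is handled by the classical lemma identifying the leg-length of a $d$-rim-hook with the displacement on its abacus runner. Once the pointwise identity is verified, comparing $\zeta^{-jk}$ coefficients in the two Fourier sums yields the theorem.
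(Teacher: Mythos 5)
The paper does not actually prove this statement: it is quoted as a classical result with references to Kra\'skiewicz--Weyman and to \cite[Ex.~7.88]{EC2}, so there is no internal argument to compare yours against. Your outline is the standard proof of that classical fact, and its skeleton is sound: Frobenius reciprocity plus the roots-of-unity filter on $C_n$ reduce the theorem to the pointwise identity $\chi^\lambda(c^k)=F^\lambda(\zeta^k)$ for all $k$ (your phrase ``comparing coefficients of $\zeta^{-jk}$'' should just say that the pointwise identity makes the two averages equal term by term --- the powers $\zeta^{-jk}$ are not linearly independent, so one cannot literally compare coefficients), and that identity is true and is classically established by the $d$-core/$d$-quotient analysis you describe.

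There is, however, a concrete error that would derail the computation as written: Stanley's $q$-hook length formula carries a prefactor, namely
\[
\sum_{T:\,\sh(T)=\lambda} q^{\maj(T)} \;=\; q^{b(\lambda)}\,\frac{[n]_q!}{\prod_{u\in\lambda}[h(u)]_q},
\qquad b(\lambda)=\sum_i (i-1)\lambda_i,
\]
and you have omitted $q^{b(\lambda)}$. At roots of unity this is not cosmetic: for $\lambda=(1,1)$, $n=2$, $k=1$ your formula gives $[2]_q!/([2]_q[1]_q)=1$ at $q=-1$, whereas $\chi^{(1,1)}\bigl((12)\bigr)=-1=F^{(1,1)}(-1)$ with the correct $F^{(1,1)}(q)=q$. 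So a substantial part of the ``sign'' you plan to track comes from the phase $\zeta^{k\,b(\lambda)}$, not only from the Murnaghan--Nakayama leg-lengths and the cancellation of hooks divisible by $d$; your sign bookkeeping must match the product of these two contributions against the $d$-sign of $\lambda$. Relatedly, the collapse of the Murnaghan--Nakayama sum to a single signed multinomial uses that every sequence of $d$-rim-hook removals emptying $\lambda$ carries the same sign; that is true, but it is exactly the abacus lemma you invoke and should be stated for the whole removal sequence, not for a single hook. With the prefactor restored and that statement made precise, your argument goes through.
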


It is not hard to see that the polytopes $\Delta_k^{A_{n-1}}$ are
affinely equivalent to the usual hypersimplices defined as the
convex hull of the points $\epsilon_I$ where $\epsilon_I = \sum_{i
\in I} e_i$ and $I$ varies over all $k$-subsets of $[n]$.  The
alcoved triangulation here is identical to that studied
in~\cite{API}.  

\section{Example: Type $C$}
\label{sec:typeC} Let $\Phi = C_n$ with $2n$ long roots $\pm 2e_i$
for $1 \leq i \leq n$ and $2n(n-1)$ short roots $\pm e_i \pm e_j$
for $1 \leq i \neq j \leq n$. A system of simple roots is given by
$\alpha_1 = e_1 - e_2, \alpha_2 = e_2 - e_3, \ldots, \alpha_{n-1} =
e_{n-1}-e_n, \alpha_n = 2e_n$. Then $\theta = 2e_1 = 2\alpha_1 +
2\alpha_2 + \cdots + 2\alpha_{n-1} + \alpha_n$, so that $a_0 = a_n = 1$ and $a_i = 2$ for $1\leq i \leq n-1$.  The fundamental
coweights are given by $\omega_1 = e_1$, $\omega_2 = e_1 + e_2$,
$\ldots$, $\omega_{n-1} = e_1 + \cdots + e_{n-1}$, $\omega_n =
1/2(e_1+\cdots+e_n)$.

We identify the Weyl group $W$ of type $C_n$ with the group of
signed permutations $w_1w_2\cdots w_n$ in the usual way: $w_i \in
\pm\set{1,2,\ldots,n}$ and $|w_1||w_2|\cdots|w_n|$ is a usual
permutation in $S_n$.  For $i \in [1, n-1]$ a signed permutation $w
= w_1w_2\cdots w_n$ has a descent at $i$ if $w_i > w_{i+1}$, as
usual.  We have a descent at $0$ if $w_1 > 0$ and a descent at $n$
if $w_n < 0$.  The group $C$ has order two, with unique non-identity
element $c = (-n\, -(n-1) \cdots -2\, -1)$.  The map $\cmaj: W
\rightarrow C$ is given by
\[
\cmaj(w) = \begin{cases} \id & \mbox{if $w_n > 0$} \\ c & \mbox
{if $w_n < 0$} \end{cases}
\]
Theorem \ref{T:qWeyl} states in this case
$$
\sum_{w\in W} q^{\cdes(w)}\,e^{\cmaj(w)} = (e^{\id} + e^c)\cdot A_{n}(q)\cdot(1+q)^{n-1}.
$$

\section{Gr\"{o}bner bases}
\label{sec:grobner} In this section we will regularly refer to the
results of the first paper in this series~\cite{API}, in particular
Appendix 19.

Let $\Phi \subset V$ be a fixed irreducible root system and $P$ a
proper alcoved polytope.  We first note that the triangulations
of alcoved polytopes are coherent.
\begin{lemma}
\label{coherent} Any polytopal subdivision arising from a hyperplane
arrangement is coherent.
\end{lemma}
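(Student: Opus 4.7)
The plan is to exhibit an explicit convex piecewise-linear function on the ambient space whose maximal domains of linearity are exactly the top-dimensional cells of the subdivision; existence of such a function is by definition what it means for the subdivision to be coherent (regular).

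Suppose the arrangement consists of hyperplanes $H_i = \{\lambda \in V \mid (\lambda,\beta_i) = c_i\}$ for $i=1,\dots,N$. For each $i$ set $\phi_i(\lambda) = |(\lambda,\beta_i)-c_i|$, and put $\phi = \sum_{i=1}^N \phi_i$. Each $\phi_i$ is convex as the absolute value of an affine function, and hence $\phi$ is convex. On any connected component $C$ of the complement $V \setminus \bigcup_i H_i$ the sign of $(\lambda,\beta_i)-c_i$ is constant for every $i$, so each $\phi_i$ is affine on the closure $\overline{C}$, and hence so is $\phi$. Thus each top-dimensional cell of the subdivision is contained in a single linearity domain of $\phi$.

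It remains to verify that these linearity domains are not strictly larger than the cells. Suppose $C_1$ and $C_2$ are top-dimensional cells sharing a facet $F$. Since the $H_i$ are distinct, $F$ lies in exactly one hyperplane, say $H_i$, of the arrangement. On opposite sides of $F$ the function $\phi_i$ contributes gradients $+\beta_i$ and $-\beta_i$ respectively, while every other $\phi_j$ is affine in a neighborhood of a relative interior point of $F$. Therefore the gradient of $\phi$ jumps by $2\beta_i \neq 0$ across $F$, so $\phi$ is genuinely non-affine across $F$. It follows that the maximal linearity domains of $\phi$ coincide with the maximal cells of the arrangement, witnessing coherence. The only delicate point is the non-vanishing of this gradient jump, which is transparent here because a facet of the subdivision belongs to a single hyperplane of the arrangement, so no cancellation among the $\phi_j$'s can occur.
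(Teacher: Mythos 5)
Your proof is correct and takes essentially the same route as the paper: the convex piecewise-linear function $\sum_{i}\left|(\lambda,\beta_i)-c_i\right|$ is exactly the paper's witness for coherence, and your gradient-jump argument across a facet just makes explicit the paper's assertion that the domains of linearity coincide with the cells of the arrangement. The only point worth adding is the paper's preliminary reduction to finitely many hyperplanes (only finitely many hyperplanes of the infinite affine Coxeter arrangement are relevant to a given bounded polytope), which your indexing $i=1,\dots,N$ tacitly assumes.
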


\begin{proof}
As only finitely many hyperplanes will be involved in a
triangulation or subdivision of a polytope we may assume the set
$\S$ of hyperplanes is finite.

Pick a linear functional $\phi_H$ for each hyperplane $H \in \S$
such that $H$ is given by the vanishing of $\phi_H$.  Then define
the piecewise linear convex function $h: V \rightarrow \R$ by
\[
h(v) = \sum_{H \in \S} |\phi_H(v)|.
\]
It is clear that $h(v)$ is convex as it is a sum of convex
functions.  The domains of linearity are exactly the regions
determined by the hyperplane arrangement.  Thus the subdivision of a
polytope induced by a hyperplane arrangement is coherent.
\end{proof}

We denote by $N$ the set of vertices of the
affine Coxeter arrangement.  By \cite[Theorem~19.1]{API} the triangulation of $P$ can be
described by some appropriate term order on the polynomial ring
\[
k[P] = k[x_a | a \in N \cap \P].
\]
Let us fix coordinates on $V$ so that all points in $N$ have integer
coordinates.  Identify a vertex $a = (a_1,\ldots,a_n) \in N$ with
the coordinates $(a,1) \in V \oplus \R$.  Thus the triangulation is
also equivalent to the reduced Gr\"{o}bner basis $\g_\P$ of the
toric ideal $I_{\P} : = I_{\P \cap N}$ in the notation of
\cite[Appendix 19]{API}. By our choice of coordinates this toric
ideal is homogeneous.

In general the Gr\"{o}bner basis $\g_\P$ appears to be quite
complicated but many simplifications occur when $N$ is a lattice.
One can check directly that this is the case for the root systems
$A_n$, $C_n$ and $D_4$.

We assume that $\Phi$ is one of these root systems from now on.  Set
$c_i = \frac{\omega_i}{a_i}$. Then $N$ is spanned by the $c_i$.  In
this case an alcove has normalized volume $1$ with respect to $N$.
Thus by \cite[Proposition~19.2]{API}, $\g_\P$ has an initial ideal
generated by square-free monomials.

\begin{example}
With the notation as in Section~\ref{sec:typeC}, the vertices $N$ of
the affine Coxeter arrangement of type $C_n$ are exactly the points with all
coordinates either integers or half-integers.  One can check that
the lattice $N$ is spanned by the vectors $c_i$.
\end{example}

\begin{lemma}
\label{lem:midpoint} Let $a, b \in N$.  The midpoint $(a+b)/2$ is
either a vertex or it lies on a unique edge, such that it is the
midpoint of the two closest vertices lying on that edge. 
\end{lemma}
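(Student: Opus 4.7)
The plan is to derive both parts of the lemma from a single structural claim: whenever $a, b \in N$ satisfy $p = (a+b)/2 \notin N$, the set of roots $\alpha \in \Phi$ with $(p,\alpha) \in \Z$ spans a subspace of $V$ of codimension exactly one. Granting this, $p$ lies in the relative interior of a unique $1$-dimensional face $E$ of the arrangement, giving the uniqueness of the edge. The midpoint part will then be automatic from $2p = a+b \in N$.

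To prove the codimension-one claim, I would argue type by type, noting first that $v = a+b \in N$ and that whether $v/2$ lies in $N$ depends only on the class $\overline v \in N/2N$. In type $A_{n-1}$, where $N$ is the coweight lattice realized in $\Z^n/\Z(1,\dots,1)$, the coordinates of $v$ partition into $T \sqcup T^c$ by parity; the roots $e_i-e_j$ through $p$ are exactly those with $i,j$ in a common block, and their span has codimension one as soon as both blocks are nonempty (equivalent to $\overline v \neq 0$). In type $C_n$ with $N = \tfrac12\Z^n$, the coordinates of $v$ are integer or half-integer; stratifying the half-integer indices $S$ further by $2v_i \bmod 4$ into $S_1 \sqcup S_3$ and counting independent normals across the three root families $2e_i$, $e_i - e_j$, $e_i + e_j$ yields a span of dimension $|S^c| + (|S_1|-1) + (|S_3|-1) + 1 = n-1$ in every nontrivial case (with the obvious adjustment if one of $|S_1|,|S_3|$ vanishes). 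In type $D_4$, the lattice $N$ strictly contains $\Lambda^\vee$, and the finitely many nontrivial classes of $N/2N$ can be enumerated by the fractional parts of $p_i \bmod \Z$ and checked in coordinates.

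For the midpoint assertion, once $p$ sits on an edge $E$ with endpoints $a', b' \in N$, I would note that the vertices of the arrangement on the affine line $L \supset E$ form an arithmetic progression: since no vertex (a $0$-dimensional face) can lie in the relative interior of a higher-dimensional face, $a'$ and $b'$ are consecutive in $L \cap N$, which forces $(L-a') \cap N = \Z(b'-a')$. Writing $p = a' + s(b'-a')$ with $0 < s < 1$, the membership $2p = 2a' + 2s(b'-a') \in N$ combined with $2a' \in N$ yields $2s(b'-a') \in \Z(b'-a')$, whence $2s \in \Z$ and $s = 1/2$. This identifies $p$ as the midpoint of $a'$ and $b'$, which are the nearest vertices of $N$ on $E$.

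The principal obstacle is the codimension-one verification itself, especially in type $D_4$, where the lattice $N$ is neither the coroot lattice $L^\vee$ nor the coweight lattice $\Lambda^\vee$ and several subcases must be handled individually. The other pieces---the automatic divisibility $2p \in N$ and the arithmetic-progression argument on $L$---are essentially formal.
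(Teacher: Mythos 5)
Your reduction of the lemma to the claim that, for $p=(a+b)/2\notin N$, the roots $\alpha$ with $(p,\alpha)\in\Z$ have rank exactly $r-1$, together with the arithmetic-progression argument forcing $p$ to be the midpoint of the two consecutive lattice points on the line of the edge, is a sound skeleton, and your rank counts in types $A_{n-1}$ and $C_n$ are correct. But the entire content of the lemma is the rank claim, and you defer it precisely in the hard case, $D_4$, where the proposed ``finite check'' would in fact fail as you have set it up. Concretely, take $a=(0,0,0,0)$ and $b=(1/2,1/2,1,0)$: both are vertices of the $D_4$ arrangement ($b$ lies on the four independent hyperplanes $x_1-x_2=0$, $x_1+x_2=1$, $x_3-x_4=1$, $x_3+x_4=1$, and is even an integral combination of the $c_i=\omega_i/a_i$, namely $-c_2+c_3+c_4$), yet the midpoint $p=(1/4,1/4,1/2,0)$ satisfies $(p,\alpha)\in\Z$ only for $\alpha=\pm(e_1-e_2)$, so the rank is $1$, not $3$, and $p$ lies in the relative interior of a $3$-dimensional face. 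Similarly $a=(1/2,1/2,0,0)$, $b=(1/2,0,1/2,0)$ (both vertices) have midpoint lying only on $x_2=x_3$. So in type $D_4$ the enumeration you postpone cannot close the argument; any honest treatment must first confront what $N$ is there (the vertex set is the lattice of half-integer vectors with an even number of non-integral coordinates, strictly larger than $\Lambda^\vee$ and also strictly larger than the $\Z$-span of the $c_i$), and the codimension-one claim is genuinely problematic for it. A secondary slip in the same direction: in type $D_4$ the set of roots integral at $p$ is \emph{not} determined by the class of $v=a+b$ in $N/2N$, because $N\not\subset\Lambda^\vee$, so $(v,\alpha)$ can shift by an odd integer when $v$ moves by an element of $2N$; one needs $v$ modulo $2\Lambda^\vee$ (your fractional-parts enumeration is finer and would do, but it is not the $N/2N$ enumeration you announce).

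For comparison, the paper argues uniformly and with no case analysis: it applies an affine Weyl group element $\sigma$ carrying $p$ into the closed fundamental alcove, uses that $\sigma(a),\sigma(b)$ are integral combinations of the $c_i$ to conclude $\sigma(p)\in\frac{1}{2}N\cap\overline{A_\circ}$, and observes that the only such points other than the vertices $0,c_i$ are the edge midpoints $c_i/2$ and $(c_i+c_j)/2$ of the simplex $\overline{A_\circ}$ --- which delivers the edge and the midpoint statement simultaneously. That route is cleaner than coordinate rank counts in types $A$ and $C$, but note that it leans on exactly the point your proposal stumbles over: the identification of $N$ with the lattice generated by the $c_i$ (and its stability under the affine Weyl group), which holds in types $A_n$ and $C_n$ but not in type $D_4$, as the examples above show. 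So your plan is fine for $A_n$ and $C_n$ as an alternative, more computational proof, but the $D_4$ portion is a genuine gap rather than a routine verification left to the reader.
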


\begin{proof}
Suppose $c  = (a+b)/2$ is not in $N$. The closed fundamental alcove
$\overline{A_\circ}$ is the convex hull of the points $c_i$ and $0$.
Thus there is a (affine) Weyl group element $\sigma$ which takes the
midpoint $c = (a+b)/2$ into $\overline{A_\circ}$.  Since $\sigma(a)$
and $\sigma(b)$ are both integral linear combinations of the $c_i$,
it is clear that $\sigma(c)$ must have the form $\frac{c_i +
c_j}{2}$ or $\frac{c_i}{2}$. In the first case, $\sigma(c)$ lies on
the edge given by the intersection of the hyperplanes
$H_{\alpha_k,0}$ for $k \neq i,j$ satisfying $k \in [1,r]$ and
$H_{\theta,1}$.  In the second case the edge is given by the
hyperplanes $H_{\alpha_k,0}$ for $k \neq i$.
Thus $c$ is the midpoint of $\sigma^{-1}(c_i)$ and
$\sigma^{-1}(c_j)$, or the midpoint of $\sigma^{-1}(c_i)$ and $\sigma^{-1}(0)$.
\end{proof}

In the first case of Lemma~\ref{lem:midpoint}, we set $u(a,b) = v(a,b) = (a+b)/2$.  In the second case
we set $u(a,b)$ and $v(a,b)$ to be the two closest vertices on the
edge containing $(a+b)/2$.

\begin{example}
For type $A_{n-1}$ we can describe the vertices $u(a,b)$ and $v(a,b)$ in the following explicit manner  (\cite{API}).  Let $I,J$ be two $k$-element multi-subsets of $[n]$.  Let $a_1 \leq a_2 \leq \cdots \leq a_{2k}$ be the increasing rearrangement of $I \cup J$.  We define two $k$-element multi-subsets $U(I,J)$ and $V(I,J)$ by $U(I,J) = \{a_1,a_3,\ldots,a_{2k-1}\}$ and $V(I,J) = \{a_2,a_4,\ldots,a_{2k}\}$.  For a $k$-element multi-subset $I$, we let $a_I \in
{\mathbb R}^n$ be the (integer) vector with $j$-th coordinate $(a_I)_j$ equal to the number of occurrences of $\{1,2,\ldots,j\}$ in $I$.  Then one can check that $u(a_I,a_J)$ and $v(a_I,a_J)$ are exactly $a_{U(I,J)}$ and $a_{V(I,J)}$.
\end{example}

\begin{lemma}\label{L:alcovedmidpoint}
Suppose $a, b \in \P$ are vertices of the affine Coxeter
arrangement, where $\P$ is a proper alcoved polytope. Then the
vertices $u(a,b)$ and $v(a,b)$ are also
in $\P$.
\end{lemma}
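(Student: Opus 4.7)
The plan is to use the hyperplane description of alcoved polytopes proved earlier: $P = \{\lambda \in V \mid k_\alpha \leq (\lambda,\alpha) \leq K_\alpha \text{ for all } \alpha \in \Phi^+\}$ for some integers $k_\alpha, K_\alpha$. Consequently, every bounding hyperplane of $P$ is itself a hyperplane of the affine Coxeter arrangement. By convexity, $(a+b)/2 \in P$, which already settles the first case of Lemma~\ref{lem:midpoint}, where $u(a,b)=v(a,b)=(a+b)/2$.

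In the remaining case, write $u := u(a,b)$ and $v := v(a,b)$ for the two arrangement vertices on the unique edge through $(a+b)/2$ closest to it, so no arrangement vertex lies in the open segment $(u,v)$. The key intermediate claim is that for every arrangement hyperplane $H = H_{\alpha, m}$, either $H \supseteq [u,v]$ or $H \cap [u,v] \subseteq \{u,v\}$. Indeed, the relative interior of the edge containing $[u,v]$ lies on exactly $r-1$ linearly independent arrangement hyperplanes; adjoining any hyperplane not among these produces a $0$-dimensional intersection. So if $H$ were to meet the interior of $[u,v]$, that intersection point would be an arrangement vertex in $(u,v)$, contradicting the choice of $u,v$.

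Given the claim, take any defining inequality $k_\alpha \leq (\lambda,\alpha)$ of $P$. The affine function $t \mapsto (u + t(v-u), \alpha)$ on $[0,1]$ is either constant or strictly monotone, and by the claim it can only equal $k_\alpha$ at $t \in \{0,1\}$. Since its value at $t = 1/2$ equals $((a+b)/2, \alpha) \geq k_\alpha$, it must be $\geq k_\alpha$ throughout; in particular $(u,\alpha),(v,\alpha) \geq k_\alpha$. The upper bound $(\lambda,\alpha) \leq K_\alpha$ is handled identically, so $u, v \in P$.

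The main obstacle is the key claim: verifying that an arrangement hyperplane cannot cut the open segment $(u,v)$. The subtle point is the dimension count — that the $r-1$ edge-defining hyperplanes together with $H$ are genuinely linearly independent — which is guaranteed because their intersection, being contained in both the $1$-dimensional edge and in $H$ but not equal to the edge, must be $0$-dimensional.
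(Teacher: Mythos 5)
Your proof is correct and follows essentially the same route as the paper: both arguments rest on the hyperplane description of an alcoved polytope, convexity placing the midpoint $(a+b)/2$ in $\P$, and the key observation that an arrangement hyperplane not containing the edge can only meet the segment $[u(a,b),v(a,b)]$ at an arrangement vertex, which cannot lie strictly between $u(a,b)$ and $v(a,b)$. The paper packages this as a separating-hyperplane contradiction while you verify each defining inequality via monotone affine functions, but the underlying idea is identical.
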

\begin{proof}
As $\P$ is convex, $c = (a+b)/2 \in \P$. Assume now that $c$ is not
a vertex and suppose $u(a,b) \notin \P$. Then there exists some root
$\alpha$ and some integer $k$ so that $H_{\alpha,k}$ separates
$u(a,b)$ and $c$.  Here we pick $H_{\alpha,k}$ so that it may go
through $c$ but not through $u(a,b)$.  The intersection of
$H_{\alpha,k}$ and the edge joining segment joining $u(a,b)$ to
$v(a,b)$ is a vertex of the affine Coxeter arrangement.  But this is
impossible, as there are no vertices lying between $v(a,b)$ and
$u(a,b)$.
\end{proof}

Define a marked set $\mathcal{G}_{\P}$ of elements which lie in
$I_{\P}$ as follows:
\begin{equation}
\label{eq:gp} {\mathcal G}_{\P} = \{ \underline{x_a x_b} -
x_{u(a,b)} x_{v(a,b)} \},
\end{equation}
where $a,b$ range over pairs of unequal vertices in $\P$.  The main
result of this section is the following theorem.

\begin{theorem}
\label{thm:grobtri} Let $\Phi$ be one of the root systems $A_n$,
$C_n$ or $D_4$ and $\P$ a proper alcoved polytope.  Then there exists a term order $\prec_\P$ such that the quadratic
binomials $\mathcal{G}_{\P}$ form a (reduced) Gr\"{o}bner basis of
the toric ideal $I_{\P}$ with respect to $\prec_\P$,
such that the underlined monomial is the leading term.
\end{theorem}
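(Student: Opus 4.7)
The plan is to apply the correspondence between regular unimodular triangulations of lattice polytopes and squarefree initial ideals of toric ideals, as recorded in \cite[Theorem~19.1, Proposition~19.2]{API}. By Lemma~\ref{coherent}, the alcoved triangulation of $\P$ is coherent, induced by the convex piecewise-linear function $h(v)=\sum_{H\in\S}|\phi_H(v)|$, whose domains of linearity are exactly the closed alcoves. I take $\prec_\P$ to be the weight order on $k[\P]$ assigning weight $h(a)$ to the variable $x_a$, refined by any tie-breaking monomial order. The key analytic input is that $h$ is linear on a segment $\overline{ab}$ exactly when $a$ and $b$ lie in a common closed alcove, and is strictly convex along $\overline{ab}$ otherwise, so
\[
h(a) + h(b) \;\geq\; 2\,h((a+b)/2),
\]
with equality if and only if $a$ and $b$ lie in a common closed alcove.

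Next I verify that each $x_a x_b - x_{u(a,b)}x_{v(a,b)} \in \mathcal G_\P$ lies in $I_\P$: the identity $a+b = u(a,b)+v(a,b) = 2c$ with $c=(a+b)/2$ shows the two monomials have the same image under the toric parameterization, and $u(a,b), v(a,b) \in \P$ by Lemma~\ref{L:alcovedmidpoint}. To identify the leading term I use Lemma~\ref{lem:midpoint}: the pair $\{u(a,b),v(a,b)\}$ always lies on a common closed edge of the affine Coxeter arrangement, hence in a common closed alcove, giving $h(u(a,b))+h(v(a,b))=2h(c)$. If $a\neq b$ also lie in a common closed alcove, then $\overline{ab}$ is itself an edge of the arrangement, unimodularity forces $\{u(a,b),v(a,b)\}=\{a,b\}$, and the binomial is trivial. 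Otherwise $\overline{ab}$ crosses some hyperplane transversally in its interior, so strict convexity yields $h(a)+h(b) > 2h(c) = h(u(a,b))+h(v(a,b))$, making $x_a x_b$ the leading term.

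The crux of the proof is to show that these leading terms generate $\mathrm{in}_{\prec_\P}(I_\P)$. By unimodularity of the alcoved triangulation \cite[Proposition~19.2]{API}, Sturmfels' correspondence identifies $\mathrm{in}_{\prec_\P}(I_\P)$ with the Stanley--Reisner ideal of the triangulation. Hence $\mathcal G_\P$ is a Gr\"obner basis precisely when the alcoved triangulation is \emph{flag}, that is, when every minimal non-face is an edge. Reducedness is then automatic: the trailing monomial $x_{u(a,b)}x_{v(a,b)}$ is supported on a pair of vertices of a common alcove, hence is a face of the triangulation and so not divisible by any leading term. The flagness property is the main obstacle and is handled type by type: for $A_n$ it follows from the sorting description recalled in the example preceding Lemma~\ref{L:alcovedmidpoint} together with the analysis of \cite{API}; for $C_n$ it follows from an analogous signed-sorting argument on the half-integer lattice of Section~\ref{sec:typeC}; and for $D_4$ the verification reduces, by translation invariance under $\coweight$, to a finite check on the alcoves adjacent to the origin. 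I expect the $D_4$ case to be the most delicate step, which also accounts for the restriction on $\Phi$ in the statement of the theorem.
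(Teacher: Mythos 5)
Your setup is the same as the paper's: take the weight order coming from the coherent piecewise-linear function $h$ of Lemma~\ref{coherent}, check via $a+b=u(a,b)+v(a,b)$ and Lemma~\ref{L:alcovedmidpoint} that the binomials lie in $I_\P$, and identify the underlined monomials as leading terms (your strict-convexity computation of $h$ along $\overline{ab}$ is a harmless variant of the paper's face/non-face argument, and your reducedness remark is fine). The genuine gap is at the step you yourself label the crux. You correctly observe that, given unimodularity and the identification of $\inn_{\prec_\P}(I_\P)$ with the Stanley--Reisner ideal of the alcoved triangulation, the Gr\"obner property is equivalent to flagness: every monomial $x_{p_1}\cdots x_{p_k}$ whose pairs of indices all lie in a common closed alcove must have all its indices in a common closed alcove. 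But you do not prove this; you defer it to an unspecified ``signed-sorting argument'' for $C_n$ and an unperformed ``finite check'' for $D_4$, with only an expectation that the latter works. Without that step the quadrics $\g_\P$ are only known to generate part of the initial ideal, so the theorem is not established by your argument as written.

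For comparison, the paper dispatches exactly this point uniformly and briefly: since $\g_\P$ is marked coherently, reduction terminates, and a monomial is irreducible under $\g_\P$ if and only if its support lies in a single simplex of the triangulation, whence every monomial reduces to a standard one and $\inn_{\prec_\P}(\g_\P)$ generates $\inn_{\prec_\P}(I_\P)$. No type-by-type analysis is needed there, and indeed a short uniform justification of the flagness claim is available from the defining inequalities of alcoves: for each positive root $\alpha$, apply the pairwise hypothesis to the pair realizing the minimum and maximum of $(p_i,\alpha)$ to get a single integer $m_\alpha$ with $m_\alpha\le(p_i,\alpha)\le m_\alpha+1$ for all $i$, and the region cut out by these inequalities lies in one closed alcove. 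Relatedly, your closing remark misattributes the hypothesis on $\Phi$: the restriction to $A_n$, $C_n$, $D_4$ enters because the vertex set $N$ of the affine Coxeter arrangement is a lattice precisely in these cases, which is what makes $u(a,b)$, $v(a,b)$ well defined via Lemma~\ref{lem:midpoint} and the triangulation unimodular with respect to $N$ --- not because flagness is expected to be delicate in $D_4$.
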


\begin{proof}
By Lemma~\ref{L:alcovedmidpoint}, the binomials in $\mathcal{G}_{\P}$ do
indeed make sense and since $a+b = u(a,b) + v(a,b)$, they lie within
$I_{\P}$.

By Lemma~\ref{coherent}, the triangulation is coherent and is given
by the domains of linearity of the piecewise-linear function $h$.
The same function $h$ gives a weight vector $\w$ as described in
\cite[Appendix]{API}. By \cite[Theorem~A.1]{API} the weight
vector $\w$ induces a term order $\prec_\P$ such that
$\D_{\prec_\P}(I_\P) = \D_\w$ (we have also used the fact that the
triangulation is unimodular, and \cite[Proposition~A.2]{API}).

Now let $a,b \in \P$ be vertices of the affine arrangement.  If
$x_ax_b \neq x_{u(a,b)}x_{v(a,b)}$ then clearly $a$ and $b$ do not
belong to the same simplex of the triangulation of $\P$.  Thus
$x_ax_b$ belongs to the Stanley-Reisner ideal of the alcoved
triangulation of $\P$ while $x_{u(a,b)}x_{v(a,b)}$ does not. This
implies that under $\prec_\P$, the underlined terms in the basis
above are exactly the leading terms.  In other words, the set
$\g_\P$ is `marked coherently'.

Finally we check that ${\mathcal G}_{\P}$ is indeed a Gr\"{o}bner
basis of $I_\P$ under $\prec$.  Since $\g_\P$ is marked coherently,
it follows that the reduction of any polynomial modulo $\g_\P$ is
Noetherian (that is, it terminates).  It is clear that a monomial
$x_{p_1} \ldots x_{p_k}$ cannot be reduced further under $\g_p$ if
any only if all the $p_i$ belong to the simplex of the
triangulation.  Thus every monomial can be reduced via $\g_\P$ to a
standard monomial and hence $\inn_\prec(\g_\P)$ generates
$\inn_\prec(I_\P)$.

The fact that this Gr\"{o}bner basis is reduced is clear.
\end{proof}

\begin{cor}
\label{cor:midconverse} Let $\Phi$ be one of the root systems $A_n$,
$C_n$ or $D_4$ and $\P \subset V$ be a convex polytope with vertices
amongst the vertices of the affine Coxeter arrangement. Then $\P$ is
alcoved if and only if the conclusion of Lemma~\ref{L:alcovedmidpoint}
holds.
\end{cor}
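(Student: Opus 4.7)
The forward direction is immediate from Lemma~\ref{L:alcovedmidpoint}.

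For the converse, my plan is to prove the contrapositive: if $\P$ is not alcoved, then the midpoint conclusion fails for some pair of lattice points in $\P$. Let $\P^*$ denote the smallest alcoved polytope containing $\P$, namely the intersection of all arrangement half-spaces of the form $\{(\lambda,\alpha)\geq k\}$ or $\{(\lambda,\alpha)\leq K\}$ that contain $\P$. Since vertices of $\P^*$ lie in $N$, we have $\P^*\subseteq\mathrm{conv}(N\cap\P^*)$; hence if $N\cap\P=N\cap\P^*$ then $\P^*\subseteq\mathrm{conv}(N\cap\P)\subseteq\P$, forcing $\P=\P^*$. So, assuming $\P$ is not alcoved, we may fix a lattice point $p\in(N\cap\P^*)\setminus\P$.

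Given such a $p$, I would next locate a facet $F$ of $\P$ whose supporting hyperplane $H$ is not in the arrangement and on the opposite side of which $p$ lies, and then choose an arrangement edge $e$ through $p$ that is transverse to $H$ (rather than contained in $H$). Letting $p'$ be the lattice point of $e$ adjacent to $p$ on the $\P$-side of $H$, the midpoint $m=(p+p')/2$ lies on $e\cap H$. The plan is then to find two lattice vertices $a,b$ of $F$ with $a+b=p+p'$; for such a pair, $(a+b)/2=m$ lies on $e$, and the two lattice points on $e$ closest to $m$ are exactly $\{p,p'\}$, so $\{u(a,b),v(a,b)\}=\{p,p'\}$. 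Since $p\notin\P$, this furnishes the violation of the midpoint conclusion that we seek.

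The main technical obstacle is verifying that both the transverse edge $e$ and the symmetric pair $(a,b)\subseteq N\cap F$ can always be produced. Transversality of some arrangement edge through $p$ is automatic in dimension two, since an arrangement edge contained in the $1$-dimensional hyperplane $H$ would force $H$ itself to be an arrangement hyperplane; in higher dimensions it follows from the normal of $H$ not being proportional to any root. The existence of the symmetric pair $(a,b)$ is the more delicate step and relies on the specific lattice structure of $N$ spanned by the vectors $c_i=\omega_i/a_i$ in types $A_n$, $C_n$, and $D_4$ that is used throughout Section~\ref{sec:grobner}; a natural reduction is to pick $p$ to be a vertex of $\P^*$ closest to $\P$ in the arrangement-edge graph, so that $p'$ automatically lies in $N\cap\P$ and the vertices of the facet $F$ furnish the required pair.
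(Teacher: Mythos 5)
Your forward direction is fine, and the opening reduction (pass to the smallest alcoved polytope $P^*\supseteq P$ and pick $p\in(N\cap P^*)\setminus P$) is sound, but the heart of your converse is missing. Everything rests on producing lattice points $a,b\in N\cap P$ with $a+b=p+p'$, and this is precisely the step you leave unproved; the closing suggestion (choose $p$ closest to $P$ in the arrangement-edge graph and find the pair among the vertices of the facet $F$) is a hope, not an argument. Worse, the auxiliary claim that $m=(p+p')/2$ lies on $e\cap H$ is unjustified and false in general: $H$ is by hypothesis \emph{not} an arrangement hyperplane, and a hyperplane spanned by points of $N$ can cross an arrangement edge at any rational point strictly between two consecutive lattice points (already in rank $2$ a line through lattice points can meet an arrangement line at a one-third point of a lattice segment). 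Since any $a,b\in N\cap F$ with $a+b=p+p'$ would force $(a+b)/2\in H$, in such configurations no admissible pair on $F$ exists at all, and you offer no other source for it. So the contrapositive route, as written, does not go through: the existence of your ``symmetric pair'' is essentially the whole content of the converse, and it is the one thing never established (your identification $\set{u(a,b),v(a,b)}=\set{p,p'}$ via Lemma~\ref{lem:midpoint} is correct \emph{once} such a pair is given, but that is the easy part).

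For comparison, the paper's proof avoids this geometry entirely and stays inside the Gr\"obner machinery of Section~\ref{sec:grobner}: the midpoint hypothesis is used only to guarantee that the marked binomials (\ref{eq:gp}) are well-defined elements of $I_P$; embedding $P$ in a large alcoved polytope $Q$ gives $\g_P\subset\g_Q$, so the marking is coherent and some term order selects the underlined monomials as leading terms; then, as in the proof of Theorem~\ref{thm:grobtri}, $\g_P$ is a Gr\"obner basis whose standard monomials are supported on faces of alcoves, and the resulting regular unimodular triangulation of $P$ is by alcove simplices, so $P$ is alcoved. If you want a direct geometric argument you would have to prove an honest existence statement for the pair $(a,b)$, which is not visibly easier than the corollary itself; as it stands, your proposal has a genuine gap.
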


\begin{proof}
`Only if' is the content of Lemma~\ref{L:alcovedmidpoint}.  For the other
direction we note that the quadratic binomials $\g_\P$ can be
defined by (\ref{eq:gp}).  There is some large alcoved polytope $\Q$
which contains $\P$ and since $\g_\P \subset \g_\Q$ this allows us
to conclude that $\g_\P$ is marked coherently.  And so there is a
term order $\prec_\P$ which selects the marked monomial as leading
monomial in $\g_\P$.  It is easy to check that $\g_\P$ is the
Gro\"{o}bner basis of $I_\P$ with respect to $\prec_\P$, and the
standard monomials are exactly given by monomials corresponding to
faces of alcoves.  Thus we obtain an alcoved triangulation of $P$.
\end{proof}


%

Naturally associated to the ideal $I_{\P}$ is the projective
algebraic variety $Y_{\P}$ defined as
\[
Y_{P} = \text{Proj} \left ( k[x_a | a \in {\mathcal P}]/I_{\P}
\right ).
\]
This is the projective toric variety associated to the polytope
${\P}$.  The following corollary is immediate from Theorem~\ref{thm:grobtri}
and~\cite[Proposition~A.2]{API}. 

\begin{cor}
Let $\Phi$ be one of the root systems $A_n$, $C_n$ or $D_4$ and $\P$
a proper alcoved polytope. Let $Y_\P$ be the projective toric
variety defined by $I_\P$.  Then $Y_\P$ is projectively normal and
its Hilbert polynomial equals to the Erhart polynomial of $\P$ (with
respect to $N$).
\end{cor}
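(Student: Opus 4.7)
The plan is to deduce both statements directly from Theorem~\ref{thm:grobtri} combined with \cite[Proposition~A.2]{API}. First I would observe that Theorem~\ref{thm:grobtri} exhibits a term order $\prec_\P$ for which $\g_\P$ is a reduced Gr\"obner basis of $I_\P$ whose leading monomials are the underlined quadratic squarefree products $x_a x_b$, one for each pair of vertices $a,b \in \P \cap N$ that do not lie together in a common alcove of the alcoved triangulation. Consequently the initial ideal $\inn_{\prec_\P}(I_\P)$ is precisely the Stanley--Reisner ideal of the alcoved triangulation of $\P$; and this triangulation is unimodular with respect to the lattice $N$ because, as remarked after Lemma~\ref{coherent}, each alcove has normalized volume $1$ with respect to $N$.

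Next I would feed this data into \cite[Proposition~A.2]{API}, which is the general fact that whenever a homogeneous toric ideal $I_\P$ admits a squarefree initial ideal coming from a unimodular triangulation of $\P$, the projective toric variety $Y_\P = \mathrm{Proj}(k[x_a \mid a\in \P \cap N]/I_\P)$ is projectively normal and its Hilbert polynomial coincides with the Ehrhart polynomial of $\P$ computed relative to $N$. For projective normality, the essential point is that unimodularity forces the semigroup $\N(\P \cap N \times \{1\})$ generated by the lifted lattice points to equal the intersection of the cone over $\P$ with $N \oplus \Z$; for the Hilbert/Ehrhart identification, one counts standard monomials of degree $d$ and uses the alcoved triangulation together with unimodularity to match them bijectively with lattice points in $d\P$.

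Both statements of the corollary then follow at once by assembling these two inputs. No further obstacle is anticipated: the only nontrivial work has already been carried out in Theorem~\ref{thm:grobtri} (producing the squarefree Gr\"obner basis) and in verifying that $N$ is a lattice in types $A_n$, $C_n$, $D_4$, so that the triangulation is genuinely unimodular. The rest is a direct citation of the appendix of \cite{API}.
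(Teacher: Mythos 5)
Your argument is correct and coincides with the paper's: the authors state that the corollary is immediate from Theorem~\ref{thm:grobtri} together with \cite[Proposition~A.2]{API}, which is exactly the squarefree-initial-ideal/unimodular-triangulation criterion you invoke (your fleshing out of normality and the Hilbert--Ehrhart identification is just the content of that cited proposition). Nothing further is needed.
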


This should be compared with the work of Payne \cite{Pay}, who showed, in types $A$, $B$, $C$, and $D$, that alcoved polytopes whose vertices lie in the coweight lattice are normal with respect to the coweight lattice.


\begin{thebibliography}{Hum}



\bibitem[FZ]{FZ} {\sc S.~Fomin and A.~Zelevinsky:}
Y-systems and generalized associahedra, {\sl Annals of Mathematics}
158 (2003), 977--1018.

\bibitem[Hum]{Hum} {\sc J.~E.~Humphreys:}
 Reflection groups and Coxeter groups,  Cambridge University
Press, 1990.

\bibitem[Kos]{Kos} {\sc B.~Kostant:} private communication.

\bibitem[KW]{KW} {\sc W.~Kra\'{s}kiewicz and J.~Weyman:}  Algebra of
coinvariants and the action of the Coxeter element, {\sl Bayreuth.
Math. Schr.} 63, (2001), 265--284.

\bibitem[API]{API} {\sc T.~Lam and A.~Postnikov:} Alcoved Polytopes I, {\sl Discrete Comput. Geom.} 38 (2007), no. 3, 453--478.

\bibitem[LP]{LP} {\sc C.~Lenart and A.~Postnikov:}
Affine Weyl groups in K-theory and representation theory, {\sl Int. Math. Res. Not.} 2007, no. 12, Art. ID rnm038, 65 pp.

\bibitem[Pay]{Pay} {\sc S.~Payne:} Lattice polytopes cut out by root systems and the Koszul property, {\sl Adv. Math.} 220 (2009), no. 3, 926--935.

\bibitem[EC2]{EC2} {\sc R.~Stanley:} {\sl Enumerative
Combinatorics, Vol 2}, Cambridge, 1999.

\bibitem[SW]{SW} {\sc J.~Stembridge and D.~Waugh:} A Weyl group generating function that ought to be better known, {\sl Indagationes Mathematicae} 9 (1998), 451--457.

\bibitem[Stu]{Stu} {\sc B.~Sturmfels:} Gr\"{o}bner bases and convex polytopes,
{\sl University Lecture Series} 8, American Mathematical
Society, Providence, RI, 1996.

\bibitem[Ver]{Ver} {\sc D.~Verma:} The role of affine Weyl groups in
the representation theory of algebraic Chevalley groups and their
Lie algebras, Lie groups and their representations {\sl Proc. Summer
School, Bolyai J�nos Math. Soc.}, Budapest, 1971, 653--705.

\bibitem[WY]{WY} {\sc A.~Werner and J.Yu:} Symmetric alcoved polytopes, preprint, 2012; {\tt arXiv:1201.4378}.

\end{thebibliography}
\end{document}